\font\smallsc=cmcsc10
\font\smallsl=cmsl10
\newtheorem{theorem}{Theorem}
\newtheorem{corollary}[theorem]{Corollary}
\newtheorem{lemma}[theorem]{Lemma}
\newtheorem{proposition}[theorem]{Proposition}
\newcommand{\Ccal}{\mathcal C}
\newcommand{\Pbb}{\mathbb P}
\newcommand{\Ocal}{\mathcal O}
\renewcommand{\ni}{n^{(i)}}
\newcommand{\mi}{m^{(i)}}
\newcommand{\n}[1]{n^{(#1)}}
\newcommand{\nlinha}[1]{{n'}^{(#1)}}
\begin{document}

\title{On the gonality of stable curves}
\author{Juliana Coelho\\{\scriptsize julianacoelhochaves@id.uff.br}
\and Frederico Sercio\\{\scriptsize fred.feitosa@ufjf.edu.br}}

\maketitle

\begin{abstract}
In this paper we use admissible covers to investigate the gonality of a stable   curve $C$
over $\mathbb{C}$.
If $C$ is irreducible, we compare its gonality to that of its normalization.
If $C$ is reducible, we compare its gonality to that of its irreducible components. In both cases we obtain lower and upper
bounds. 
Furthermore, we show that four  admissible covers constructed give rise to generically injective maps between Hurwitz schemes. 
We show that the closures of the images of  three of these maps are   components of the boundary of the target Hurwitz schemes, and  the closure of the image of the remaining map is a  component of a certain codimension-1 subscheme of the boundary of the target Hurwitz scheme.
\end{abstract}

\section{Introduction}

The gonality is an important numerical invariant in the study of algebraic curves and is related to the  study of finite maps between curves.  A smooth curve $C$ is said to be $k$-gonal if it admits a degree-$k$ map 
$C\rightarrow \mathbb{P}^{1}$. 
If $C$ has genus $g$,  it follows from the Riemann-Roch theorem  that $C$ is $k$-gonal for any $k\geq g+1$. Furthermore, it follows from Brill-Noether theory 
that $C$ is $k$-gonal for some $k\leq (g+3)/2$.

In 1891, A. Hurwitz  gave in \cite{Hu} a complex structure to the set $H_{k,g}$ of all $k$-sheeted simple coverings of $\mathbb{P}^{1}$ by smooth complex curves of genus $g$ with $b=2g+2k-2$ branch  points. Using calculations of L\"{u}roth and Clebsch, he proved that $H_{k,g}$ is connected.  In 1921, F. Severi  proved in \cite{S} the irreducibility of the (coarse) moduli space ${M}_{g}$ of smooth curves of genus $g$ by combining Hurwitz's result with the fact that every smooth curve of genus $g$ appears as a $k$-sheeted simple covering of $\mathbb{P}^{1}$, if $k\geq g+1$.   Later on, in 1969, W. Fulton considered in  \cite{F}  the problem of constructing $H_{k,g}$ over the integers. He defined  a finite \'{e}tale morphism  from $H_{k,g}$ onto an open subscheme of the quasi-projective scheme parametrizing $b$-tuples of distinct points of $\mathbb{P}^{1}$,  by mapping a covering to its branch locus.  

When studying smooth curves in families, one is compelled to consider singular curves as well. Indeed, the moduli space $ M_g$ of smooth curves of genus $g$ is not compact, and a compactification is given by the moduli space $\overline{ M}_g$ of  stable curves of (arithmetic) genus $g$. A stable curve $C$  is said to be $k$-gonal if it is a limit of smooth $k$-gonal curves. In 1982, Harris and Mumford   characterized $k$-gonal stable curves in terms of  $k$-sheeted admissible covers in \cite{HM82}. This result is a natural connection between the study of gonality of stable curves and the study of (generalized) Hurwitz schemes parametrizing admissible covers.

Not much is known about the gonality of stable curves.  
Ballico investigated the gonality of graph curves in \cite{B1}.
Caporaso showed in \cite{C} a relation between the gonality of a stable curve and that of its graph.
In her thesis \cite{Br}, Brannetti studied the gonality of stable curves with two components.

\subsection{Main Results}

In this work we use admissible covers to investigate the relationship between the gonality of a complex stable
curve  and that  of its partial normalizations and  irreducible components. 
A $k$-sheeted admissible cover for a nodal curve $C$ is   a finite morphism  $\pi\colon C\rightarrow B$ of degree $k$ satisfying a few conditions, where $B$ is a nodal curve of genus 0  (see Section \ref{sec:gon} or \cite{HM82} for the precise definition). 
By \cite{HM82}, a stable curve $C$ is $k$-gonal if and only if there is a $k$-sheeted admissible cover for a nodal curve $C'$ stably equivalent to $C$, that is, such that $C$ is obtained from $C'$ by contracting to a node every rational component of $C'$ that meets the rest of the curve in only one or two points. It is a well known fact that a $k$-gonal stable curve is also $r$-gonal for every $r>k$. We recover this fact in Theorem \ref{prop:step2} by explicitly constructing a $(k+1)$-sheeted admissible cover from a $k$-sheeted one.

Now let $n$ be a node of $C$ and let $C_n$ be the normalization of $C$ at $n$. We say that $n$ is a separating node if $C_n$ is disconnected. If $n$ is non-separating and $C_n$ is $k$-gonal then there is a $k$-sheeted admissible cover $\pi\colon C_n'\rightarrow B$, where $C_n'$ is stably equivalent to $C_n$. 
In Theorem \ref{irred:lem} we construct from $\pi$ an admissible cover for a curve stably equivalent to $C$, thus comparing the gonality of $C$ to that of $C_n$. The construction is different depending on the behaviour of $\pi$ on the points of $C_n'$ over the node $n$ and the resulting admissible cover can be either $k$-sheeted or $(k+1)$-sheeted. As a consequence we obtain in Corollary \ref{irred:cor} a comparison between the gonality of an irreducible nodal curve $C$ and that of its normalization.

Now, if $n$ is a separating node, then $C_n$ can be seen as a disjoint union of two subcurves of $C$. More generally, let $Y_1,\ldots,Y_r$ be connected subcurves of $C$ such that $C=Y_1\cup\dots\cup Y_r$ and $Y_i\cap Y_{i'}$ is either empty or finite for $i\neq i'$. If each $Y_i$ is $k_i$-gonal, then for each $i$ there is a  $k_i$-sheeted admissible cover $\pi_i\colon Y_i'\rightarrow B_i$, where $Y_i'$ is stably equivalent to $Y_i$. 
Under certain conditions on the maps $\pi_1,\ldots,\pi_r$, we produce a $k$-sheeted  admissible cover for a curve stably equivalent to $C$, where $k=k_1+\ldots+k_r-(\delta_1+\ldots+\delta_r)/2$ and $\delta_i$ is the number of points where $Y_i$ meets the rest of the curve $C$ (see Theorem \ref{red:nonconjug0} for the precise statement). This allows us to compare in Corollary \ref{cor:bound} the gonality of $C$ to that of its irreducible components.

The theorem below summarizes the bounds obtained in Section \ref{sec:gon}: 
\smallskip
\pagebreak

\noindent{\bf Theorem A} \; {\it Let $C$ be a stable curve.

\noindent (i) If $C$ is irreducible with $\delta$ nodes and its  normalization  is $\tilde k$-gonal, then $C$ is $k$-gonal for some $k$ satisfying 
\[\tilde k\leq k\leq \tilde k+\delta.\]

\noindent (ii) If $C$ is reducible with irreducible components $C_{1},\ldots,C_{p}$ and each $C_i$ is $k_i$-gonal, then  $C$ is $k$-gonal for some $k$ satisfying 
\[ k_{1}+\ldots+k_{p}-\delta\leq k\leq k_{1}+\ldots+k_{p}+\delta-2(p-1),\]
where  $\delta$ is the number of external nodes of $C$.
}\smallskip

In Section \ref{sec:hurwitzscheme}  we view  four constructions of admissible covers done in Section \ref{sec:gon} as rational maps  between pointed Hurwitz schemes.
A pointed Hurwitz scheme $\overline H_{k,g,\ell_1,\ldots,\ell_n}$ parametrizes $k$-sheeted admissible covers for genus-$g$ curves $C$ together with $\ell_1+\ldots+\ell_n$ marked points on $C$ satisfying certain conditions (see Section \ref{sec:hurwitzscheme} for the precise definition). 
The four maps are shown to be generically injective in Theorem \ref{thm:geninj} and, as a consequence, we can determine their images.

It's easy to describe the boundary of the Hurwitz scheme $\overline H_{k,g}$ parametrizing $k$-sheeted admissible covers $\pi\colon C\rightarrow B$ where $C$ is a nodal curve of genus $g$. 
For instance, the closure   $\Delta^n$ of the locus of those admissible covers such that $B$ has $n+1$ components is in the boundary. Moreover, $\Delta^n$ has pure codimension $n$ in  $\overline H_{k,g}$ but, in general, it is not irreducible. 
We show in Theorem \ref{thm:images} that the closures of the images of the maps of Hurwitz schemes we constructed are irreducible components of $\Delta^n$, for some $n=1,2$.

The results of Section \ref{sec:hurwitzscheme}  are summarized in the following theorem:\smallskip

\noindent{\bf Theorem B} \; {\it There are generically injective rational maps 
\[\gamma\colon \overline{H}_{k,g,1}\dashrightarrow\overline{H}_{k+1,g},
\quad\quad
\phi\colon \overline{H}_{k,g,1,1}\dashrightarrow\overline{H}_{k+1,g+1},
\quad\quad
\psi\colon \overline{H}_{k,g,2}\dashrightarrow\overline{H}_{k,g+1}
\]
and
\[\lambda\colon \overline{H}_{k_{1},g_{1},\delta}
\times
\overline{H}_{k_{2},g_{2},\delta} \dashrightarrow \overline{H}_{k_{1}+k_{2}-\delta,g_{1}+g_{2}+\delta-1}\]
such that 
the closures of the images of $\gamma$,  $\psi$ and $\lambda$ are irreducible components of $\Delta^{1}$, and
the closure of the image of  $\phi$ is an irreducible component of $\Delta^{2}$, 
in the corresponding Hurwitz scheme. 
}\smallskip

\section{Technical background}

In this paper we always work over the field of complex numbers $\mathbb{C}$.
A \textit{curve} $C$ is a connected,
projective and reduced scheme of dimension $1$ over $\mathbb{C}$. 
The \emph{genus} of $C$ is $g(C):=h^1(C,\Ocal_C)$.
A \textit{subcurve} $Y $ of $C$ is a reduced subscheme of pure dimension $1$, or equivalently, a reduced union of irreducible components of $C$. 
If $Y\subseteq C$ is a subcurve, we set $Y^{c}:=\overline{C\smallsetminus Y}$. 

A \textit{nodal curve} $C$ is a curve with at most ordinary double points. A node $n$ of  $C$ is said to be  \textit{external} if $n\in Y\cap Y^{c}$ for some subcurve $Y$ of $C$, otherwise the node is called \textit{internal}. 
An external node is said to be \emph{separating} if there is a subcurve $Y$ of $C$ such that $Y\cap Y^c=\{n\}$. In this case, the subcurves $Y$ and $Y^c$ are said to be \emph{tails} of $C$ associated to the separating node $n$.

Let $C$ be a nodal curve and let $\nu:\widetilde{C}\rightarrow C$ be its
normalization. Let $n$ be a node of $C$. The \textit{branches over the node} $n$ are the points $\n{1} ,\n{2} \in \widetilde{C}$ such that $\nu(\n1)=\nu(\n2)=n$.

A \textit{family of curves} is a proper and flat morphism $f:\mathcal{C}\rightarrow S$ whose fibers are curves. If $s\in S$, we denote by ${C}_{s}:=f^{-1}(s)$ its fiber over $s$. 
A \textit{smoothing} of a curve $C$ is a 
family $f:\mathcal{C}\rightarrow S$, where $S=\text{Spec}(\mathbb{C}[[t]])$, such that the  generic fiber of $f$ is smooth and the
special fiber is isomorphic to $C$, henceforth identified with $C$.

Let $S$ be a scheme and let $g$ and $n$ be non-negative integers such that
$2g-2+n>0$. A \textit{$n$-pointed stable curve of genus $g$ over $S$} is a
family of curves $f:\mathcal{C}\rightarrow S$ together with $n$
distinct sections $\sigma_{i}:S\rightarrow \Ccal$ such that, for every $s\in S$:
\begin{enumerate}
\item The geometric fiber $C_{s}$ of $f$ is a genus-$g$ nodal curve and $\sigma_1(s),\ldots,\sigma_n(s)$ are distinct  smooth points of $C_s$;
\item For every  smooth rational subcurve $E$ of $C_s$, 
the number of points where  $E$ meets $E^{c}$ plus the number of indices $i$ such that  $\sigma_{i}(s)$ lies on $E$ is at least three.
\end{enumerate}

A \textit{stable curve over $S$} is a 0-pointed stable curve over $S$. A \emph{stable curve} is a stable curve over Spec$(\mathbb{C})$.
The scheme $\overline M_{g,n}$  parametrizing $n$-pointed stable curves of genus $g$  is   projective and irreducible, by  \cite{K2} and \cite{K3}. Set  $\overline M_g=\overline M_{g,0}$.

\section{Admissible covers}\label{sec:gon}

A smooth curve is \emph{$k$-gonal} if it admits a $\mathfrak{g}_{k}^{1}$, that is, a line bundle of degree $k$ having at least two independent sections.
Equivalently, a smooth curve is $k$-gonal if it admits a map of degree  $k$ or less  to $\mathbb{P}^{1}$. Now, a stable curve $C$ is 
 \emph{$k$-gonal} if it is a limit of smooth $k$-gonal curves in $\overline M_g$. More precisely, $C$ is $k$-gonal if it admits a
 smoothing $f:\mathcal{C}\rightarrow S$ whose geometric general fiber is
a $k$-gonal smooth curve and the special fiber is $C $.

Let $C$ and $C'$ be nodal curves. We say that  $C^{\prime}$ is \emph{stably equivalent} to $C$ if $C$ can be obtained from $C^{\prime}$ by contracting to a
point some of the smooth rational components of $C^{\prime}$ meeting the other
components of $C^{\prime}$ in only one or two points. 
We remark that,  although a $k$-gonal stable curve $C$ may not admit a $\mathfrak{g}_{k}^{1}$, there is always 
a nodal curve $C^{\prime}$  stably equivalent to $C$
admiting a $\mathfrak{g}_{k}^{1}$. 
Indeed, if $C$ is $k$-gonal then there exists a
 smoothing $f:\mathcal{C}\rightarrow S$ whose geometric general fiber is
a $k$-gonal smooth curve and the special fiber is $C$.
Changing the base and blowing up  $\mathcal C$ if necessary, we obtain a  regular family $f':\mathcal C'\rightarrow S'$ whose generic fiber is isomorphic to that of $f$ and whose special fiber is a nodal curve $C'$ stably equivalent to $C$.
Since $\mathcal C'$ is regular, there exists a line bundle $\mathcal L$ on $\mathcal C'$ whose restriction 
to the generic fiber of $f'$ is a $\mathfrak g_k^1$. Then by uppersemicontinuity, the restriction of $\mathcal L$ to the special fiber $C'$ is also a $\mathfrak g_k^1$.
However, the  converse is not always true, and a stable curve admiting a  $\mathfrak{g}^{1}_{k}$ may fail to be $k$-gonal. Indeed, 
if $C$ is the curve obtained by gluing a Weierstrass point of a hyperelliptic smooth curve to a non-Weierstrass point, then the resulting $\mathfrak{g}^1_2$ on $C$ is not a limit  of $\mathfrak{g}^{1}_{2}$ on smooth curves. 


Alternatively, $k$-gonal stable curves can be characterized in terms of admissible covers.
A \emph{$k$-sheeted admissible cover} consists of  a finite morphism $\pi:C\rightarrow B$ of degree $k$, such that $B$ and $C$ are nodal curves and:
\begin{enumerate}
\item $\pi^{-1}\left(B^{\text{sing}}\right)  =C^{\text{sing}}$;
\item $\pi$ is simply branched away from $C^{\text{sing}}$, that is, 
over each smooth point of $B$ there exists at most one point of $C$ 
where $\pi$ is ramified and this point has ramification index 2;
\item $B$ is a stable pointed curve of genus 0, when considered with its smooth points on the branch locus of $\pi$;
\item for every node $q$ of $B$ and every node $n$ of $C$ 
lying over it, the
two branches of $C$ over $n$ map to the branches of $B$ over $q$ with the same
ramification index.
\end{enumerate}
It follows from the Riemann-Hurwitz theorem that the number of smooth branch points of $\pi$ is  $b=2g+2k-2$, where $g$ is the genus of $C$.

Let $\pi\colon C\rightarrow B$ be a $k$-sheeted admissible cover. Let   $n$ be a node of $C$ and set $q=\pi(n)$. Condition 4 above implies that, locally around $n$, the curve $C$ can be described as $xy=t$  and, locally around $q$, the curve  $B$ can be described as $uv=t^\ell$ for some $\ell$. Moreover, the map $\pi$ is given by $u=x^\ell$ and $v=y^\ell$.

 The following result is a consequence of \cite[Thm. 4, p. 58]{HM82}
and relates the notion of admissible cover to that of gonality of stable curves.

\begin{theorem}[Harris-Mumford]\label{HarrMum}
A stable curve $C$ is $k$-gonal if and only if
there exists a $k$-sheeted admissible cover $C^{\prime}\rightarrow B$, 
where $C^{\prime}$ is stably equivalent to $C$.
\end{theorem}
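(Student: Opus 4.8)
The plan is to prove both implications by relating the $k$-gonal locus in $\overline{M}_g$ to the space of admissible covers through one-parameter degenerations, using the valuative criterion of properness as the main engine. Throughout I write $b=2g+2k-2$ for the number of branch points dictated by Riemann--Hurwitz.

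For the ``only if'' direction, suppose $C$ is $k$-gonal, so it admits a smoothing $f\colon\mathcal{C}\to\text{Spec}(K[[t]])$ with special fiber $C$ and smooth $k$-gonal general fiber. Restricting to the generic point $\eta=\text{Spec}(K((t)))$ yields a smooth $k$-gonal curve $\mathcal{C}_\eta$ over $K((t))$; replacing the chosen $\mathfrak{g}^1_k$ by a general one, I may assume the associated degree-$k$ map $\mathcal{C}_\eta\to\mathbb{P}^1_\eta$ is simply branched over $b$ distinct points, hence defines a $K((t))$-point of the Hurwitz scheme $H_{k,g}$ and thus of its admissible-cover compactification $\overline{H}_{k,g}$. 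The plan is then to invoke properness of $\overline{H}_{k,g}$ over $\overline{M}_{0,b}$: by the valuative criterion, after a finite base change $t\mapsto t^{1/m}$ this generic cover extends to a $K[[t^{1/m}]]$-point of $\overline{H}_{k,g}$ whose special fiber is a $k$-sheeted admissible cover $\pi\colon C'\to B_0$ of a stable pointed genus-$0$ curve. Finally, the forward morphism $\overline{H}_{k,g}\to\overline{M}_g$ sending an admissible cover to the stable model of its domain carries this cover to the stable limit of the smooth fibers, namely $C$; therefore $C'$ is stably equivalent to $C$.

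For the ``if'' direction, suppose I am given a $k$-sheeted admissible cover $\pi\colon C'\to B_0$ with $C'$ stably equivalent to $C$. The plan is to smooth it. Choose a smoothing $\mathcal{B}\to\text{Spec}(K[[t]])$ of the target inside $\overline{M}_{0,b}$, separating the $b$ marked points on the general fiber $\mathbb{P}^1$ and smoothing every node of $B_0$, and lift it to a family of covers $\mathcal{C}'\to\mathcal{B}$ via the explicit local normal form recorded before the theorem: away from the nodes the cover extends as the unramified or simply branched one, while over a node of $B_0$, where locally $B_0$ is $uv=t^\ell$ and $C'$ is $xy=t$ with $\pi$ given by $u=x^\ell,\ v=y^\ell$, the cover deforms compatibly with the smoothing of $uv$. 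The general fiber is then a simple branched cover of $\mathbb{P}^1$ by a smooth genus-$g$ curve, hence a smooth $k$-gonal curve, and the special fiber is $C'$; stabilizing the total family, i.e.\ contracting the unstable rational components of $C'$, exhibits $C$ as a limit of smooth $k$-gonal curves.

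The main obstacle is the forward direction, and precisely its two properness/compatibility inputs. First, one needs that $\overline{H}_{k,g}$ is proper, so that the limiting cover genuinely exists and inherits the local structure of condition 4 — here the delicate point is that colliding branch points force blow-ups of the target, producing the nodal genus-$0$ curve $B_0$, together with matching modifications of the domain, and one must check these do not alter the stable model of the domain. Second, one needs that the forward map to $\overline{M}_g$ identifies that stable model with $C$ rather than with some other degeneration. Both are exactly the content of Harris--Mumford's construction of the Hurwitz scheme. The ``if'' direction is comparatively soft once the local smoothing model is in hand, the only care being the verification that the smoothed target carries $b$ distinct branch points and that the resulting total space is regular with smooth general fiber.
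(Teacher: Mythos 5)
First, a remark on the comparison: the paper does not actually prove this statement --- it is quoted from Harris--Mumford, and the ``proof'' in the paper is only the citation \cite[Thm.~4, p.~58]{HM82}. Your outline does follow the strategy of the original argument (properness of the space of admissible covers plus the valuative criterion for the ``only if'' direction; explicit smoothing via the local normal form $xy=t$, $uv=t^{\ell}$ for the ``if'' direction), and the ``if'' half is essentially correct as a sketch, granting the deformation-theoretic input you explicitly defer to Harris--Mumford.

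However, the ``only if'' direction has a genuine gap at its very first step: you claim that, ``replacing the chosen $\mathfrak{g}^1_k$ by a general one,'' you may assume the degree-$k$ map $\mathcal{C}_\eta\rightarrow\mathbb{P}^1_\eta$ is simply branched over $b$ distinct points. This is false in general, for two separate reasons. First, the generic fiber may carry a \emph{unique} $\mathfrak{g}^1_k$, and that pencil may be forced to have non-simple branching: for a family of cyclic (Galois) trigonal curves, the unique $\mathfrak{g}^1_3$ of the generic fiber is totally ramified at every ramification point, and there is no other pencil to move to. Second, under the paper's convention that $k$-gonal means admitting a map of degree $k$ \emph{or less}, the generic fiber may admit no degree-$k$ map at all: a hyperelliptic curve of genus at least $3$ is $3$-gonal in this sense, but by the Castelnuovo--Severi inequality it admits no degree-$3$ map to $\mathbb{P}^1$. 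In both situations the generic fiber defines no $K((t))$-point of $H_{k,g}$, so the valuative criterion cannot be launched as you describe. The missing idea is a preliminary conversion of an arbitrary degree-$\leq k$ cover of the smooth generic fiber into a $k$-sheeted admissible cover over $K((t))$: one attaches rational tails to domain and target at the points of excess ramification, and adds further rational components (to both domain and target, keeping the domain connected) to raise the degree when it is strictly less than $k$. These are exactly the surgeries this paper performs in Section \ref{sec:gon} (Step 2 in the proof of Theorem \ref{irred:lem}, and the construction of part (a) of that theorem). With that reduction in place your properness argument goes through; without it, the first step fails for precisely the kinds of families the theorem must cover.
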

\begin{proof}
Cf. \cite[Thm. 3.160, p. 185]{HM}.
\end{proof}

The following lemma relates the irreducible components of a nodal curve $C$ to those of a curve stably equivalent to it.

\begin{lemma}\label{lem:admiss}
Let $C$ be a nodal curve and let $\pi:C'\rightarrow B$  be an admissible cover, where $C'$ is stably equivalent to $C$. Denote by $\tau:C'\rightarrow C$ the induced map. Then  for each irreducible component $C_r$ of $C$ there is an irreducible component $C_r'$ of $C'$ such that $\tau(C_r')=C_r$ and $\tau|_{C_r'}$ is a normalization map.
\end{lemma}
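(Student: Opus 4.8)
The plan is to work directly with the contraction map $\tau\colon C'\to C$. By the definition of stable equivalence, $\tau$ contracts to a point exactly the smooth rational components of $C'$ meeting the rest of $C'$ in at most two points, and is an isomorphism away from those components and their (finitely many) images in $C$; call a component of $C'$ \emph{exceptional} if it is contracted and \emph{essential} otherwise. First I would record the local structure: away from the exceptional components and the finite set of their images, $\tau$ is an isomorphism. In particular, for each component $C_r$ of $C$ there is a dense open subset over which $\tau$ is bijective, so $\tau$ restricts to a birational map onto $C_r$.

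To produce $C_r'$, I would choose a general point $p\in C_r$ lying on no other component of $C$, on no node, and off the finite image of the exceptional locus, so that $\tau^{-1}(p)$ is a single point $p'$. This $p'$ lies on a unique component of $C'$, which I define to be $C_r'$; since $\tau$ is a local isomorphism near $p'$, the component $C_r'$ is essential and $\tau(C_r')$ is an irreducible curve through $p$, forcing $\tau(C_r')=C_r$. Uniqueness follows because over the dense open subset above $\tau$ is bijective, so no second component of $C'$ can dominate $C_r$. Being a non-constant morphism from a projective irreducible curve, $\tau|_{C_r'}\colon C_r'\to C_r$ is proper with finite fibres, hence finite, and it is birational by construction.

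It then remains to upgrade ``finite and birational'' to ``normalization,'' for which the essential point is that $C_r'$ is smooth; here I would invoke the admissible-cover hypothesis. Since $B$ has arithmetic genus $0$, it is a tree of $\mathbb{P}^{1}$'s with no self-nodes, so every node of $B$ separates two \emph{distinct} components. If some component $D$ of $C'$ had a self-node $n$, then by condition (1) of the definition of admissible cover $n$ would lie over a node $q$ of $B$; by condition (4) the two branches of $C'$ at $n$ — both lying on the single irreducible curve $D$, hence both mapping into the single component $\pi(D)$ of $B$ — would map to the two \emph{distinct} branches of $B$ at $q$, which lie on different components of $B$. This is a contradiction, so every component of $C'$, and in particular $C_r'$, is smooth. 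A finite birational morphism from a smooth (hence normal) curve satisfies the universal property of normalization, whence $\tau|_{C_r'}$ is the normalization map of $C_r$.

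The hard part, and the only step that genuinely uses the hypotheses rather than formal properties of contractions, is exactly this smoothness of the components of $C'$: without it the map $\tau|_{C_r'}$ would in general only be a \emph{partial} normalization, separating the self-nodes of $C_r$ created by contracting rational bridges but not any self-nodes already carried by $C_r'$ inside $C'$. The tree structure of $B$ together with conditions (1) and (4) is precisely what rules out such pre-existing self-nodes and pins the map down to the full normalization.
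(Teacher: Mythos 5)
Your proposal is correct and follows essentially the same route as the paper: identify the unique component of $C'$ dominating $C_r$, show every component of $C'$ is smooth using the admissible-cover conditions, and conclude because a finite birational map from a smooth curve is a normalization. Your detailed smoothness argument (condition (4) plus the tree structure of $B$) is precisely what the paper's terser justification --- that smoothness of the components of $B$ together with $\pi^{-1}(B^{\text{sing}})=C'^{\text{sing}}$ forces the components of $C'$ to be smooth --- leaves implicit.
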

\begin{proof}
First we note that, since the components of $B$ are smooth and $\pi^{-1}(B^{\text{sing}})=C'^{\text{sing}}$, the components of $C'$ are also smooth.
Now fix a component $C_r$ of $C$. By definition, there is an unique component $C_r'$  of $\tau^{-1}(C_r)$ dominating $C_r$. Since $C_r'$ is smooth, $\tau|_{C_r'}:C_r'\rightarrow C_r$ is a normalization map.
\end{proof}

Let $C$ be a nodal curve and $C'$ a nodal curve stably equivalent to $C$ with  induced map $\tau:C'\rightarrow C$. 
We say that a point $p'\in C'$  \emph{lies over} a point $p\in C$ if  $\tau(p')=p$.

The following result shows in particular that, if a stable curve is $k$-gonal for some $k$, then it is $r$-gonal for every $r>k$.

\begin{theorem}\label{prop:step2}
Let $\pi:C\rightarrow B$ be a degree-$k$ finite morphism of nodal curves. 
\begin{enumerate}[(a)]
\item If $\pi$ satisfies  conditions (1), (3) and (4) of an  admissible cover, then there  is a $k$-sheeted admissible cover $\pi':C'\rightarrow B'$ such that $C'$ (resp. $B'$) is stably equivalent to $C$ (resp. $B$) , contains $C$ (resp. $B$) as a subcurve and $\pi'|_{C}=\pi$.
\item There is a degree-$(k+1)$  finite morphism of nodal curves  $\pi':C'\rightarrow B'$ such that $C'$ (resp. $B'$) is stably equivalent to $C$ (resp. $B$), contains $C$ (resp. $B$) as a subcurve and $\pi'|_{C}=\pi$. Moreover, if $\pi$ is an admissible cover, then so is $\pi'$. 
\end{enumerate}
\end{theorem}
\begin{proof}
(a) For each  smooth point $q\in B$ such that
\[\pi^{-1}(q)  =\rho_{1}m_{1}+\ldots+\rho_{\ell}m_{\ell}\]
where $m_{1},\ldots,m_{\ell}\in C$ and 
 $\rho_{j}\geq3$ for some
$j=1,\ldots,\ell$ or $\rho_{j}=\rho_{j'}=2$ 
for some
$j,j'=1,\ldots,\ell$ with $j'\neq j$, we glue (see Figure \ref{fig:step2a}):

\begin{figure}[h!]
\centering
\includegraphics[height=2.6in]{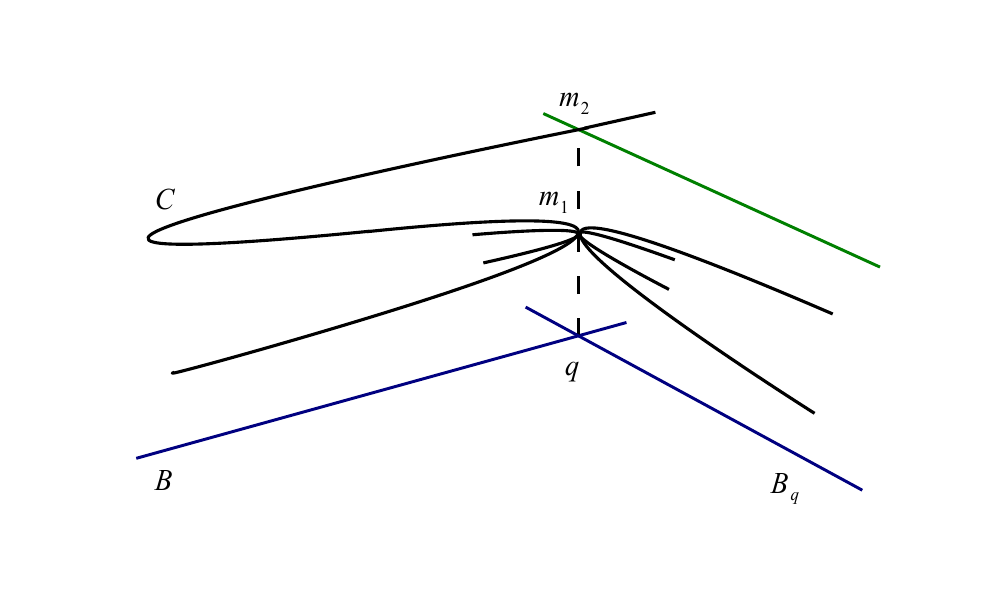}
\caption{Theorem \ref{prop:step2} (a) for $k=5$, $\rho_1=4$ and $\rho_2=1$}
\label{fig:step2a}
\end{figure}

\begin{itemize}
\item a copy of $\Pbb^1$  at $q$, denoted by $B_q$;

\item a copy of $\mathbb{P}^{1}$ at every $m_{j}$, denoted by $L_j$, mapping to $B_q$ via a degree-$\rho_{j}$ map totally ramified at $m_j$, simply ramified away from $m_j$ and unramified over the branch points of $L_{j'}\rightarrow B_q$, for $j,j'=1,\ldots,\ell$ with $j'\neq j$.
\end{itemize}

At the end of this proccess we obtain from $C$ a nodal curve $C^{\prime}$ stably equivalent to $C$,  from $B$ a nodal curve $B^{\prime}$ stably equivalent to
$B$, and a degree-$k$ map $\pi':C'\rightarrow B'$ given by $\pi$ when restricted
to $C$, and by the maps described above when restricted to the added rational
components of $C^{\prime}$. 
By construction, it is clear that $\pi'$ satisfies conditions (1), (2) and (4) of an admissible cover.
It also satisfies condition (3) since, by the Riemann-Hurwitz  theorem, the restriction of $\pi'$ to $L_j$ has $2\rho_j-2$ ramification points, counted with multiplicity. 
By construction, the multiplicity of $q$ is $\rho_j-1$ and there must be  $\rho_j-1$ other simple ramification points.
Hence, if $\rho_{j}\geq3$ then $B_q$ has at least two marked points distinct from $q$. 
If $\rho_{j}=\rho_{j'}=2$ then the ramification points  of the maps  from $L_j$ and $L_{j'}$ to $ B_q$
 give three distinct marked points in $B_q$, one of which is $q$. Hence $B'$ is stable when considered with the smooth branch locus of $\pi'$, showing that $\pi'$ is an admissible cover.

(b) Choose any point $m$ in the smooth locus of $C$, let $q=\pi(m)$ and
\[
\pi^{-1}(q)  =\rho_{0}m_{0}+ \rho_{1}m_{1}+\ldots+\rho_{\ell}m_{\ell}
\]
where  $m_0=m$ and $m_{1},\ldots,m_{\ell}\in C$. We glue (see Figure \ref{fig:step2c}):

\begin{figure}[h!]
\centering
\includegraphics[height=2.6in]{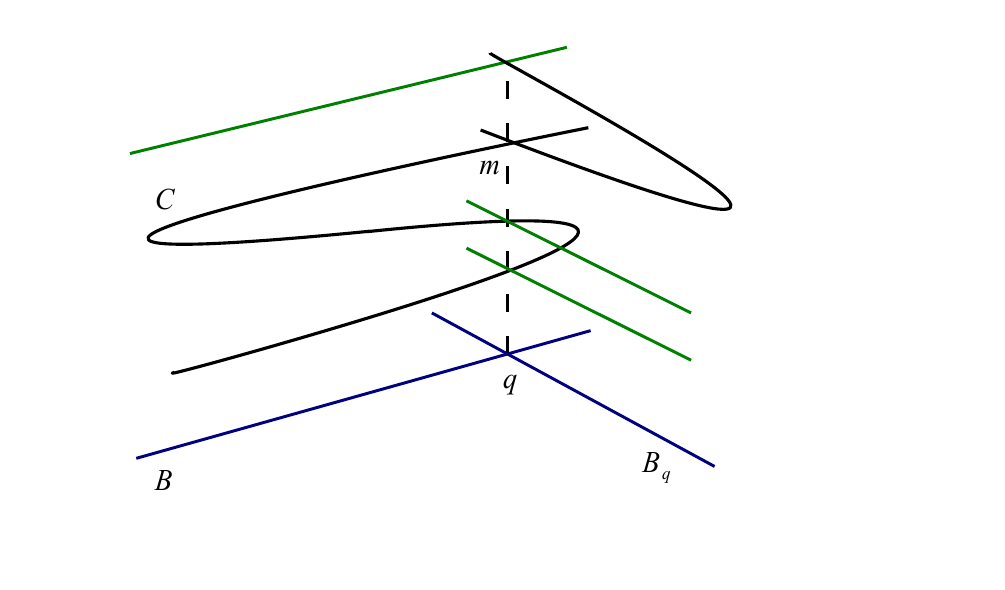}
\caption{Theorem \ref{prop:step2} (b) for $k=3$}
\label{fig:step2c}
\end{figure}

\begin{itemize}
\item a copy of $\Pbb^1$  at $q$, denoted by $B_q$;

\item a copy of $\mathbb{P}^{1}$ at every $m_{j}$, denoted by $L_j$, mapping to $B_q$ via a degree-$\rho_{j}$ map totally ramified at $m_j$, simply ramified away from $m_j$
and unramified over the branch points of $L_{j'}\rightarrow B_q$, for $j,j'=1,\ldots,\ell$ with $j'\neq j$;

\item a copy of $\mathbb{P}^{1}$ at  $m_{0}$, denoted $L_0$ mapping to $B_q$ via a degree-$(\rho_{0}+1)$ map  ramified of order $\rho_0$ at $m_0$, simply ramified away from $m_0$ and unramified over the branch points of $L_{j'}\rightarrow B_q$, for $j'=1,\ldots,\ell$;

\item a copy  of $B$, mapping to $B$ isomorphically, at the point of $L_0$ distinct from $m_0$ lying over $q$ by the map $L_0\rightarrow B_q$.
\end{itemize}

At the end of this proccess we obtain from $C$ a nodal curve $C^{\prime}$ stably equivalent to
$C$,  from $B$ a nodal curve $B^{\prime}$ stably equivalent to
$B$, and a map
$\pi^{\prime}:C^{\prime}\rightarrow B^{\prime}$ of degree $k+1$ given by $\pi$ when restricted
to $C$, and by the maps described above when restricted to the added rational
components of $C^{\prime}$. Note that, since the map $L_0\rightarrow B_q$ has  $2(\rho_0+1)-2$ branch points, then there are at least two branch points in $B_q$ distinct from $q$. Therefore if $\pi$ is admissible, then $\pi'$ is also admissible.
\end{proof}

\subsection{Non-separating nodes}

We first focus on a partial normalization of a stable curve at a non-separating node. This will allow us to relate the gonality of an irreducible curve to that of its normalization in Corollary \ref{irred:cor}.

\begin{theorem}\label{irred:lem}
Let $C$ be a stable curve and
let $n$ be a non-separating node of $C$. Let $C_{n}$ be the normalization
of $C$ at $n$ and let
$\pi:C_{n}^{\prime}\rightarrow B$ be a finite map of degree $k$,
where $C_{n}^{\prime}$ is stably equivalent to $C_{n}$ and $\pi$ satisfies conditions (1), (3) and (4) of a $k$-sheeted admissible cover. 
Let  $\n1,\n2 \in C_n$ be the branches over $n$ and, for $i=1,2$, let ${n'}^{(i)}$ be a smooth point of $C_n'$ lying over $\ni$.
\begin{enumerate}[(a)]
\item If $\pi(\nlinha 1)  \neq\pi(\nlinha 2)$,
then there is a $(k+1)$-sheeted admissible cover $\pi':C'\rightarrow B'$, where $C'$ is stably equivalent to $C$, contains $C_n'$ as a subcurve and $\pi'|_{C_n'}=\pi$. In particular, $C$ is $(k+1)$-gonal.
\item If  $\pi({\nlinha 1})  =\pi({\nlinha 2})  $, then  there is a $k$-sheeted admissible cover $\pi':C'\rightarrow B'$, where $C'$ is stably equivalent to $C$, contains $C_n'$ as a subcurve and $\pi'|_{C_n'}=\pi$. In particular, $C$ is $k$-gonal.
\end{enumerate}
\end{theorem}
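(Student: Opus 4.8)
The plan is to invoke the Harris--Mumford characterization (Theorem \ref{HarrMum}): since $\pi$ already satisfies conditions (1), (2) and (4), it suffices in each case to enlarge $\pi\colon C_n'\to B$ to an admissible cover $\pi'\colon C'\to B'$ of the stated degree, with $C_n'\subset C'$, $\pi'|_{C_n'}=\pi$ and $C'$ stably equivalent to $C$; the $k$-gonality (resp.\ $(k+1)$-gonality) of $C$ then follows. The guiding idea is that $C$ is recovered from $C_n$ by gluing the branches $\n{1},\n{2}$, so I will reattach $n$ by joining the equivalent points $\nlinha{1},\nlinha{2}$ through a connected, arithmetic-genus-$0$ chain of rational curves which, upon contraction, collapses to a single node (raising the genus of $C_n'$ by one, as needed). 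Simultaneously $B$ must be enlarged by rational components so that this chain maps to a genuine \emph{node} of $B'$, forced by condition (1). I may assume $\nlinha{1},\nlinha{2}$ are unramified for $\pi$, the ramified case being identical after adjusting the ramification indices of the bridges below.

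Case (b) is the simpler one. Here $q_0:=\pi(\nlinha{1})=\pi(\nlinha{2})$ is a single point of $B$, and I attach one smooth rational component $T$ to $B$ at $q_0$, turning $q_0$ into a node of $B'$ while keeping $B'$ of genus $0$. Over $T$ I place a bridge $E\cong\mathbb{P}^1$ meeting $C_n'$ exactly at $\nlinha{1}$ and $\nlinha{2}$ and mapping to $T$ as a double cover unramified over $q_0$; contracting $E$ glues $\nlinha{1}$ to $\nlinha{2}$ and produces the node $n$. To enforce condition (1) I attach, over each remaining preimage of $q_0$ in $C_n'$, a rational tail mapping to $T$ with degree equal to the corresponding ramification index, so condition (4) holds at every new node and the total degree over $T$ is again $k$. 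Finally I mark on $T$ the two simple branch points of $E$, so that $B'$ is a stable pointed genus-$0$ curve and $\pi'$ is simply branched (condition (3)); exactly two branch points are added, consistently with $b'=2g+2k-2$.

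Case (a) is where the extra sheet, and hence the $+1$, is forced. Since $q_1:=\pi(\nlinha{1})$ and $q_2:=\pi(\nlinha{2})$ are now distinct points of the tree $B$, any connected chain joining $\nlinha{1}$ and $\nlinha{2}$ over $B'$ must cover a path of $B$ between $q_1$ and $q_2$; one cannot instead identify $q_1$ with $q_2$ in the base without creating a loop and raising the genus of $B'$ above $0$. I therefore add one further sheet over $B$, namely a component $R_0$ mapping isomorphically onto $B$, which raises the degree to $k+1$, and I attach smooth rational components $T_1,T_2$ to $B$ at $q_1,q_2$ (keeping $B'$ a genus-$0$ tree). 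The connecting chain is then $R_1$--$R_0$--$R_2$, where $R_1\to T_1$ and $R_2\to T_2$ are double covers, $R_1$ meeting $C_n'$ at $\nlinha{1}$ and $R_0$ over $q_1$, and $R_2$ meeting $C_n'$ at $\nlinha{2}$ and $R_0$ over $q_2$. This chain has arithmetic genus $0$, meets $C_n'$ in the two points $\nlinha{1},\nlinha{2}$, and so contracts to the node $n$, while $R_0$ is connected to the rest of $C'$ only through $R_1,R_2$. As before I attach rational tails over the other preimages of $q_1,q_2$ to secure conditions (1) and (4) and mark the four new simple branch points on $T_1,T_2$, which matches $b'=2g+2(k+1)-2$.

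The main obstacle is precisely this genus constraint in case (a): guaranteeing that $\nlinha{1}$ and $\nlinha{2}$ can be joined by a rational chain mapping to a nodal, genus-$0$ base without introducing a loop, which is what compels the extra sheet and explains why the cover is $(k+1)$-sheeted rather than $k$-sheeted. The remaining work is bookkeeping that I expect to be routine but which must be checked carefully: that every preimage of each newly created node of $B'$ is turned into a node of $C'$ (condition (1)); that the two branches at each such node carry equal ramification indices (condition (4)); that the degree is constant ($k$ in (b), $k+1$ in (a)) over every component of $B'$; that $B'$ with its marked points is stable of genus $0$ and $\pi'$ is unramified off $C'^{\mathrm{sing}}$ and the marked points (condition (2)); and that contracting the added rational bridges and tails returns exactly $C$. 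For branch points $\nlinha{i}$ that are ramified the same construction applies with $E$, $R_1$, $R_2$ taken with the matching ramification index over the node, leaving all conclusions unchanged.
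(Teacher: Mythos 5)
Your constructions at the node itself coincide with the paper's. In case (b), your bridge $E$ over the new base component $T$ is exactly the paper's copy of $\mathbb{P}^1$ through $\nlinha{1}$ and $\nlinha{2}$ mapping to the copy at $q$ with degree $l_1+l_2$; in case (a), your chain $R_1$--$R_0$--$R_2$ with $R_0$ a degree-one copy of $B$ is exactly the paper's copies I, III and II over the two new components attached at $q_1$ and $q_2$; and your rational tails over the remaining points of the fibers over $q_0$ (resp.\ $q_1,q_2$) are the paper's totally ramified copies of $\mathbb{P}^1$ at the points $\mi_j$. Your genus-theoretic explanation of why the extra sheet is forced in case (a) is sound motivation, though the paper does not need it.

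There is, however, one genuine omission. The hypothesis grants $\pi$ only conditions (1), (2) and (4): $\pi$ need \emph{not} be simply branched, and in the paper's applications it genuinely is not (in Corollary \ref{cor:irred2} one takes $\pi\circ\nu$ for an arbitrary degree-$k$ map on an irreducible curve, which can have arbitrary ramification). You assert at the end of case (b) that, after marking the two new branch points of $E$, ``$\pi'$ is simply branched (condition (3))'', and similarly in case (a); this fails whenever $\pi$ has a non-simple branch point $p\in B$ away from the fibers you modify, since $\pi'$ restricted over $B\subset B'$ inherits that branching, the resulting cover is not admissible, and Theorem \ref{HarrMum} cannot be invoked. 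The paper repairs this with a separate step (Step 2 in its proof): for every such $p$, writing $\pi^{-1}(p)=\rho_1 m_1+\ldots+\rho_u m_u$, one attaches a further copy of $\mathbb{P}^1$ to $B$ at $p$ and, over it, a rational tail at each $m_j$ mapping with degree $\rho_j$, totally ramified at $m_j$ and otherwise simply branched; this turns each $m_j$ into a node where condition (4) holds and relocates all branching over $p$ to simple branch points on the new component. This is the same tail-attaching trick you already use over $q_0,q_1,q_2$, so the fix is immediate --- but as written your $\pi'$ violates condition (3) under the stated hypotheses, and this step must be added in both cases.
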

\begin{proof}
(a) 
To obtain the cover $\pi^{\prime}$ we proceed as follows.
Set $q_i:=\pi(\nlinha i)$ and let $l_{i}$  be the ramification index of $\pi$ at $\nlinha i$, for $i=1,2$. We thus have
\[\pi^{-1}(q_i)   = l_{i}\nlinha i+\lambda^{(i)}_{1}\mi_{1}+\ldots+\lambda^{(i)}_{u_i}\mi_{u_i},\]
where $\mi_j\in C_{n}'$, for  $i=1,2$ and $j=1,\ldots,u_i$. For each $i$ we glue (see Figure \ref{fig:irredlema}):
\begin{itemize}
\item a copy of $\Pbb^1$ to $B$ at $q_i$;

\item a copy of $\mathbb{P}^{1}$  to $C_n'$ at $\nlinha i$ mapping to the copy at $q_i$ via a degree-$(l_{i}+1)$ map ramified to order $l_i$ at $\nlinha i$  and simply ramified away from $\nlinha i$ (call this copy $L_i$);

\item a copy of $B$  passing through $L_1$ and $L_2$ at the points   lying over $q_1$ and $q_2$ but distinct from $\nlinha 1$ and $\nlinha 2$, respectively,  mapping to $B$ via an isomorphism;

\item a copy of $\mathbb{P}^{1}$ at every $\mi_{j}$, for $j=1,\ldots,u_i$, mapping to the copy at $q_i$ via a degree-$\lambda^{(i)}_{j}$ map totally ramified at $\mi_j$ and simply ramified away from  $\mi_j$.
\end{itemize}

At the end of this proccess we obtain from $C_n'$ a curve $C^{\prime}$ stably equivalent to
$C$,  from $B$ a  curve $B^{\prime}$ stably equivalent to $B$, and a map
$\pi^{\prime}:C^{\prime}\rightarrow B^{\prime}$ given by $\pi$ when restricted
to $C_{n}',$ and by the maps described above when restricted to the added rational
components of $C^{\prime}$. 
By construction, $C_n'$ is a subcurve of $C'$ and $\pi^{\prime}$ satisfies conditions (1) and (4) of a $(k+1)$-sheeted admissible cover. 
It also satisfies condition (3) since, for $i=1,2$, 
the restriction of $\pi'$ to $L_i$ has $2(l_i+1)-2$ ramification points and thus there are at least two marked points in the copy of $\mathbb P^1$ at $q_i$. 
The result now  follows from Theorem \ref{prop:step2} (a).

\begin{figure}[h!]
\centering
\includegraphics[height=2.6in]{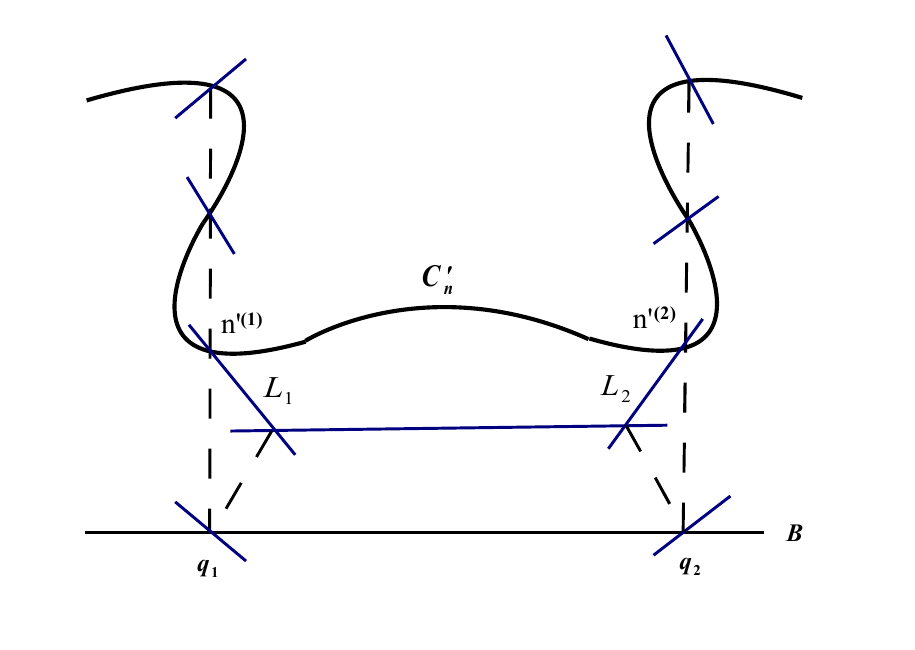}%
\caption{Theorem \ref{irred:lem} (a) for $k=3$}
\label{fig:irredlema}
\end{figure}

\smallskip
(b) To obtain the cover $\pi^{\prime}$ we proceed as follows.
Set $q:=\pi(\nlinha 1)=\pi(\nlinha 2) $ and let $l_{i}$ be
the ramification index of $\pi$ at $\nlinha i$, for $i=1,2$, so that
\[
\pi^{-1}\left(  q\right)  =l_{1}\nlinha 1+l_{2}\nlinha 2+\lambda_{1}m_{1}+\ldots+\lambda_{u}m_{u},
\]
where $m_{1},\ldots,m_{u}\in C_{n}'$. We glue (see Figure \ref{fig:irredlemb}):
\begin{itemize}
\item a copy of $\Pbb^1$ to $B$ at $q$;

\item a copy of $\mathbb{P}^{1}$ to $C_n'$ passing through  $\nlinha 1$ and  $\nlinha 2$ mapping to the copy over $q$ via a degree-$(l_{1}+l_{2})$ map ramified to order $l_1$ at $\nlinha 1$ and to order $l_2$ at $\nlinha 2$, and simply ramified away from $\nlinha 1$ and $\nlinha 2$ (call this copy $L$);

\item a copy of $\mathbb{P}^{1}$ at every $m_{j}$, for $j=1,\ldots,u$
mapping to the copy at $q$ via a degree-$\lambda_{j}$ map totally ramified at $m_j$ and simply ramified away from $m_j$.
\end{itemize}

\begin{figure}[h]
\centering 
\includegraphics[height=2.1in]{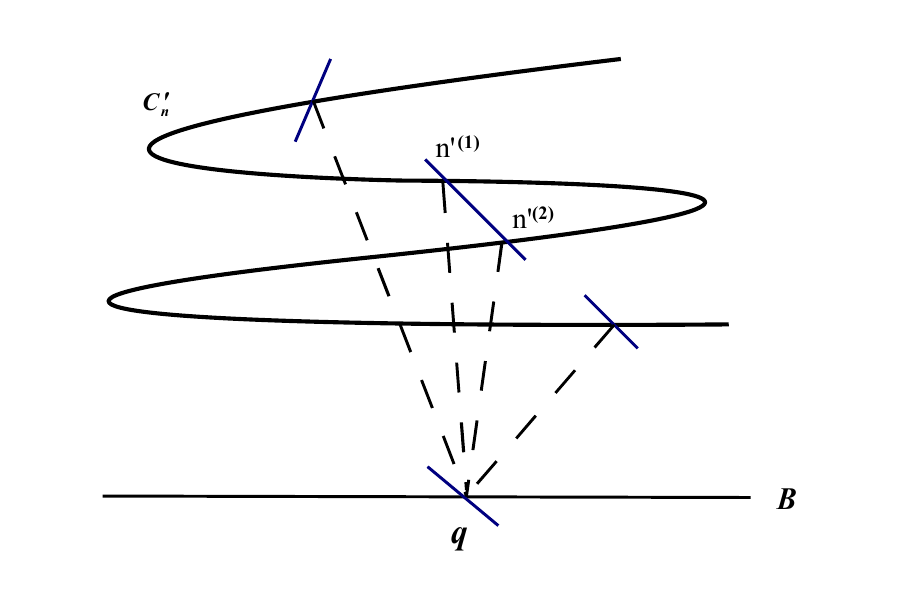}
\caption{Theorem \ref{irred:lem} (b) for $k=4$}
\label{fig:irredlemb}
\end{figure}

As before,  we obtain a degree-$k$ map $\pi^{\prime}:C^{\prime}\rightarrow B^{\prime}$ satisfying conditions (1) and (4) of an admissible cover, and such that $C_n'$ is a subcurve of $C'$ and $C'$ is stably equivalent to $C$. 
The map $\pi'$ also satisfies condition (3), since the restriction of $\pi'$ to $L$ has $2(l_1+l_2)-2$  ramification points, and hence there are at least two marked points in the copy of $\mathbb P^1$ at $q$. 
The result now  follows from Theorem \ref{prop:step2} (a).
\end{proof}

\begin{corollary}\label{irred:cor}
Let $C$ be an irreducible nodal curve with nodes
$n_1,\ldots,n_{\delta}$ and let $\widetilde{C}$ be its normalization. 
Let $\n1_{j},\n2_{j} \in \widetilde C$ be the branches over each $n_{j}$, for
$j=1,\ldots,\delta$.
Assume that $\widetilde{C}$ is $k$-gonal with degree-$k$ map
$\pi:\widetilde{C}\rightarrow \Pbb^1$ and let $\varepsilon$ be the number of indices $j$ such that $\pi(\n1_j)\neq\pi(\n2_j)$. Then  $C$ is $(k+\varepsilon)$-gonal. 
\end{corollary}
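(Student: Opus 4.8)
The plan is to reduce Corollary \ref{irred:cor} to a repeated application of Theorem \ref{irred:lem}, normalizing one node at a time. The key observation is that an irreducible nodal curve $C$ with $\delta$ nodes can be obtained from its normalization $\widetilde C$ by gluing the $\delta$ pairs of branches, and that each such gluing reverses exactly one partial normalization. So I would build a chain of intermediate curves $C=C^{(0)}, C^{(1)}, \ldots, C^{(\delta)}=\widetilde C$, where $C^{(j)}$ is the normalization of $C^{(j-1)}$ at one of the remaining nodes, and run Theorem \ref{irred:lem} backwards along this chain, starting from the admissible cover $\pi\colon\widetilde C\to\Pbb^1$ and adding one node back at each stage.

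First I would check that every node of $C$ is non-separating, so that Theorem \ref{irred:lem} applies at each step. This is immediate because $C$ is irreducible: normalizing at any node $n$ cannot disconnect $C$ (a separating node would force a decomposition $C=Y\cup Y^c$ with $Y\cap Y^c=\{n\}$, contradicting irreducibility). I would note that the same stays true at every intermediate stage, since each $C^{(j)}$ is again irreducible (partial normalization of an irreducible curve is irreducible). Next, I would observe that the branches $\{\n1_j,\n2_j\}\subset\widetilde C$ over the various nodes are the points that get glued, and that the dichotomy in Theorem \ref{irred:lem}, namely whether $\pi(\nlinha1)=\pi(\nlinha2)$ or not, is exactly the condition controlling the degree increase: reattaching the node $n_j$ raises the sheet number by $1$ precisely when $\pi(\n1_j)\neq\pi(\n2_j)$, and leaves it unchanged otherwise. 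Since the number of indices $j$ with $\pi(\n1_j)\neq\pi(\n2_j)$ is $\varepsilon$, summing the contributions along the chain yields a final admissible cover of degree $k+\varepsilon$, whence $C$ is $(k+\varepsilon)$-gonal by Theorem \ref{HarrMum}.

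The main subtlety I expect to confront is the bookkeeping of how the branch points $\n{i}_j$ and their images behave as nodes are reattached one by one. Theorem \ref{irred:lem} is stated for a \emph{single} node and produces a new cover $\pi'\colon C'\to B'$ with $C_n'\subseteq C'$ and $\pi'|_{C_n'}=\pi$, but after the first step the target is no longer $\Pbb^1$; it is a stable pointed genus-$0$ curve $B'$, and at the next step I must identify where the remaining branch points sit relative to $B'$ and verify that the hypotheses of the theorem (conditions (1), (2), (4) of an admissible cover) are preserved for the restricted map $\pi'$. The cleanest way to handle this is to note that at each stage the relevant pair of branches lies over the copy of $\widetilde C$ (equivalently, $\pi'$ restricted to the image of $\widetilde C$ is still $\pi$), so the values $\pi(\n1_j)$ and $\pi(\n2_j)$ are unchanged throughout, and the equal-versus-distinct dichotomy is determined once and for all by $\pi$ on $\widetilde C$. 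This is what makes the count additive and independent of the order in which the nodes are reattached, giving the clean total $k+\varepsilon$.

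Finally, I would remark that the construction automatically produces a curve $C'$ stably equivalent to $C$ carrying a genuine $(k+\varepsilon)$-sheeted admissible cover, so no separate verification of stability or of the limit-of-smooth-curves condition is needed beyond invoking Theorem \ref{HarrMum}. The only place where care is required is confirming that the $\mathbb{P}^1$-components added in the two cases of Theorem \ref{irred:lem} do not interfere with branches belonging to \emph{other} nodes; since each added rational component meets the rest of the curve only along the fibers over the single node being reattached, and those fibers are disjoint from the branches of the other nodes, the successive applications are genuinely independent, completing the argument.
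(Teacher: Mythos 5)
Your proposal is correct and takes essentially the same route as the paper: reattach the nodes one at a time via Theorem \ref{irred:lem}, using the invariant that $\widetilde C$ remains an irreducible component of each intermediate curve with the cover restricting to $\pi$ on it, so that the dichotomy $\pi(\n1_{j})\neq\pi(\n2_{j})$ versus $\pi(\n1_{j})=\pi(\n2_{j})$ is fixed once and for all and the degree increases exactly $\varepsilon$ times before invoking Theorem \ref{HarrMum}. The only cosmetic difference is that the paper processes the nodes in a fixed order (all $\varepsilon$ nodes of type (a) first, then the $\delta-\varepsilon$ nodes of type (b)), whereas you note the order is immaterial.
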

\begin{proof}
By Theorem \ref{HarrMum} it is sufficient to construct a $(k+\varepsilon)$-sheeted admissible cover $\pi^{\prime}:C^{\prime
}\rightarrow B^{\prime}$ where $C^{\prime}$ is stably equivalent to $C$. Let 
\[J:=\{j\in\{1,\ldots,\delta\} \ |\ \pi(\n1_j)\neq\pi(\n2_j)\}.\]
We  proceed first by induction on the number $\varepsilon$ of elements of  $J=\{j_1,\ldots,j_{\varepsilon}\}$.

Let $C_{0}=\widetilde C$ and for each $e=1,\dots,\varepsilon$ denote by $C_{e}$ the curve obtained by normalizing all the nodes of $C$, except $n_{j_1},\ldots,n_{j_e}$.
For  $0\leq e<\varepsilon$, assume there is a  
$(k+e)$-admissible cover
$\pi_{e}:C_{e}'\rightarrow B_{e}$ such  that $C_{e}'$ is a curve stably equivalent to $C_{e}$ having $\widetilde C$ as a component and 
$\pi_{e}|_{\widetilde C}=\pi$.  Then, by Theorem \ref{irred:lem} (a), there is a $(k+e+1)$-admissible cover
$\pi_{{e+1}}:C_{{e+1}}'\rightarrow B_{{e+1}}$ such that that $C_{{e+1}}'$ is stably equivalent to $C_{{e+1}}$, has $\widetilde C$ as a component and $\pi_{{e+1}}|_{\widetilde C}=\pi$. 

We have thus showed that there is a $(k+\varepsilon)$-sheeted admissible cover
$\pi_{{\varepsilon}}:C_{{\varepsilon}}'\rightarrow B_{{\varepsilon}}$ such that that $C_{{\varepsilon}}'$ is stably equivalent to the curve obtained normalizing the nodes $n_j$ of $C$ such that $\pi(\n1_j)=\pi(\n2_j)$. By construction, $\widetilde C$ is a component of $C_{{\varepsilon}}'$  and $\pi_{{\varepsilon}}|_{\widetilde C}=\pi$. Hence, for every index $j$ in
\[J':=\{j\in\{1,\ldots,\delta\} \ |\ \pi(\n1_j)=\pi(\n2_j)\}\]
we have $\pi_{{\varepsilon}}(\n1_j)=\pi_{{\varepsilon}}(\n2_j)$. Note that $J'$ has $\delta-\varepsilon$ elements.

It follows from  Theorem \ref{irred:lem} (b), applied $(\delta-\varepsilon)$ times, that there is a  $(k+\varepsilon)$-sheeted admissible cover
$\pi':C'\rightarrow B'$ such that that $C'$ is stably equivalent to $C$. 
\end{proof}

\begin{corollary}\label{cor:irred2}
Let $C$ be an irreducible nodal curve. If there exists a degree-$k$ map $\pi:C\rightarrow\mathbb{P}^{1} $ then $C$ is $k$-gonal.
\end{corollary}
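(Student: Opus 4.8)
The plan is to reduce Corollary \ref{cor:irred2} to Corollary \ref{irred:cor} by passing through the normalization of $C$. The statement gives a $k$-degree map $\pi\colon C\to\mathbb P^1$ on an irreducible nodal curve, and we want to conclude that $C$ is $k$-gonal in the limit-of-smooth-curves sense. The key observation is that such a map $\pi$ pulls back along the normalization $\nu\colon\widetilde C\to C$ to give a $k$-degree map $\widetilde\pi:=\pi\circ\nu\colon\widetilde C\to\mathbb P^1$, so $\widetilde C$ is $k$-gonal. If I can show that $\pi$ identifies the two branches over \emph{every} node, then the number $\varepsilon$ of Corollary \ref{irred:cor} is zero, and that corollary immediately yields that $C$ is $(k+0)=k$-gonal.

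First I would set up the branches: let $n_1,\dots,n_\delta$ be the nodes of $C$ with branches $\{\n1_j,\n2_j\}\subset\widetilde C$ over $n_j$. Since $\pi$ is a genuine finite map defined on $C$ itself (not merely on a partial normalization), it takes a single well-defined value at each node $n_j$. The two branches $\n1_j,\n2_j$ both map under $\nu$ to the same point $n_j\in C$, so $\widetilde\pi(\n1_j)=\pi(\nu(\n1_j))=\pi(n_j)=\pi(\nu(\n2_j))=\widetilde\pi(\n2_j)$. Hence for the map $\widetilde\pi$ on $\widetilde C$ we have $\widetilde\pi(\n1_j)=\widetilde\pi(\n2_j)$ for \emph{all} $j=1,\dots,\delta$, so in the notation of Corollary \ref{irred:cor} the index set $J$ is empty and $\varepsilon=0$.

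Then I would simply invoke Corollary \ref{irred:cor} applied to the $k$-gonal normalization $\widetilde C$ with its $k$-degree map $\widetilde\pi$: since $\varepsilon=0$, the corollary gives that $C$ is $(k+\varepsilon)=k$-gonal, as desired. The argument is essentially a one-line reduction once the branch-agreement is observed.

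The main subtlety to be careful about is exactly the branch-agreement step, and whether the $k$-degree map $\widetilde\pi$ can legitimately serve as the input to Corollary \ref{irred:cor}: that corollary takes a $k$-degree map $\widetilde C\to\mathbb P^1$ witnessing $k$-gonality of the normalization, which is precisely what $\widetilde\pi$ provides. The point worth emphasizing is that the hypothesis of Corollary \ref{cor:irred2} is stronger than merely assuming $\widetilde C$ is $k$-gonal: the map is required to descend from (indeed, be defined on) $C$, which forces the branches to agree at every node and thereby kills the $\varepsilon$ term that would otherwise increase the gonality. I expect no real obstacle beyond making this descent/agreement explicit; the rest is a direct citation.
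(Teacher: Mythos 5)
Your proposal is correct and is essentially identical to the paper's proof: both compose $\pi$ with the normalization $\nu\colon\widetilde C\to C$ to get $\widetilde\pi=\pi\circ\nu$, note that $\widetilde\pi$ takes the same value on both branches over every node (so $\varepsilon=0$), and then cite Corollary \ref{irred:cor}. Your write-up merely makes the branch-agreement computation slightly more explicit than the paper does.
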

\begin{proof}
Let $\nu:\widetilde{C}\rightarrow C$ be the normalization of $C$. For each node $n_j$ of $C$ let $\n1_{j},\n2_{j}\in \widetilde C$ be the branches over  $n_{j}$.
Then $\tilde \pi:=\pi\circ\nu$ is a degree-$k$ map such that $\tilde\pi(\n1_j)=\tilde\pi(\n2_j)$ for every $j$.
The result thus follows from Corollary \ref{irred:cor}, with $\varepsilon=0$.
\end{proof}

\subsection{Separating nodes}

We now turn our attention to separating nodes. 

\begin{theorem}\label{red:nonconjug0}
Let $C$ be a stable curve and let $Y_{1},\ldots,Y_r\subset C$ be connected  subcurves such that $C=Y_1\cup\ldots\cup Y_r$ and $Y_i\cap Y_{i'}$ is  finite, possibly empty, for $1\leq i\neq i'\leq r$.
For $i=1,\ldots,r$ let:
\begin{itemize}
\item  $\pi_{i}:Y_{i}'\rightarrow B_{i}$ be a $k_{i}$-sheeted admissible cover, where $Y_{i}^{\prime}$ is stably equivalent to $Y_{i}$;
\item  $n_{i,1},\ldots,n_{i,\delta_i}\in C$ be the intersection points between $Y_{i}$ and $Y_{i}^c$;
\item  $\ni_{j}$ be the branch over $n_{i,j}$ at $Y_{i}$ and let ${n_j'}^{(i)}$ be a smooth point of $ Y_i'$ lying over $\ni_j$, for $j=1,\ldots,\delta_i$.
\end{itemize}
If for every $i=1,\dots,r$ we have $\pi_{i}({n_1'}^{(i)})=\ldots=\pi_{i}({n_{\delta_i}'}^{\hspace{-.04in}(i)})$,
then $C$ is $(k_{1}+\ldots+k_{r}-\delta)  $-gonal, where $\delta=(\delta_1+\ldots+\delta_r)/2$.
\end{theorem}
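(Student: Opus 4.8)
The plan is to adapt the gluing technique of Theorem \ref{irred:lem}, assembling the given covers $\pi_i\colon Y_i'\to B_i$ into a single admissible cover of a curve stably equivalent to $C$; the degree will then be forced to equal $k:=k_1+\ldots+k_r-\delta$ by a genus count. Write $q_i\in B_i$ for the common image $\pi_i({n_1'}^{(i)})=\cdots=\pi_i({n_{\delta_i}'}^{(i)})$ provided by the hypothesis. First I would build the base $B'$ by attaching each $B_i$ to a central smooth rational curve $B_0$, identifying $q_i$ with a distinct point of $B_0$ (for $r=2$ one may glue $B_1$ to $B_2$ directly); then $B'$ is a genus-$0$ nodal curve whose marked points are the branch points of the $\pi_i$ in the interiors of the arms, together with the branch points relocated onto $B_0$ as explained below, which renders $B'$ stable. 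It is precisely the hypothesis that all external branches of $Y_i$ lie in the single fibre $\pi_i^{-1}(q_i)$ that makes this one-point attachment possible.

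Next I would define $\pi'\colon C'\to B'$ componentwise. Over each arm $B_i$ I take $\pi_i$ on $Y_i'$ together with $k-k_i$ disjoint copies of $B_i$ mapped isomorphically, so that the degree over $B_i$ equals $k$; each such copy meets the rest of $C'$ in a single point and is therefore a rational tail. Over $B_0$ I place a disjoint union of rational components, finite over $B_0$, whose fibre over each node $q_i$ reproduces the ramification of $\pi_i^{-1}(q_i)$ enlarged by the extra sheets. These components serve two purposes: for every external node of $C$, say joining $Y_a$ and $Y_b$, one of them is a bridge through the two points equivalent to the branches of that node, so that contracting it recovers the node; the remaining ones splice together the leftover sheets. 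Because $\pi_i$ is an admissible cover it is at worst simply branched over $q_i$, so each external branch has index $1$ or $2$; when two indices to be matched across a node differ, I would resolve the discrepancy with a degree-$2$ bridge carrying one simple branch point in the interior of $B_0$ and absorbing one trivial sheet, which is why an interior branch point of $\pi_i$ sitting at $q_i$ is moved onto $B_0$.

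The crux is choosing the identifications of sheets over the nodes $q_i$ so that, after contracting the added rational components, the dual graph of $C'$ becomes that of $C$ and $C'$ is connected. Connectedness I would secure by a ``cross'' pattern, threading the trivial sheets of one arm into the main components $Y_j'$ of the neighbouring arms rather than joining trivial sheets to one another; this is exactly the choice that makes the simplest case $r=2$, $\delta=1$ come out connected and of the correct genus. Granting this, every component added to the $Y_i'$ is smooth rational and meets the rest of $C'$ in at most two points, so contracting them returns $C$ and shows $C'$ is stably equivalent to $C$; conditions (1)--(4) of an admissible cover then hold by construction, condition (4) being the matching of ramification indices enforced at each node.

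Finally I would check the numerics. Since $C'$ is stably equivalent to $C$, their arithmetic genera agree, and $g(C)=\sum_i g(Y_i)+\delta-r+1$. The simple branch points of $\pi'$ are exactly those of the $\pi_i$, namely $\sum_i b_i=\sum_i\big(2g(Y_i)+2k_i-2\big)$ of them, relocation onto $B_0$ leaving the total unchanged. As $B'$ has genus $0$ and $C'$ is connected, the relation $b=2g(C')+2k-2$ reads $\sum_i\big(2g(Y_i)+2k_i-2\big)=2\big(\sum_i g(Y_i)+\delta-r+1\big)+2k-2$, which forces $k=k_1+\ldots+k_r-\delta$; Theorem \ref{HarrMum} then shows that $C$ is $(k_1+\ldots+k_r-\delta)$-gonal. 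I expect the genuine obstacle to be the combinatorial step of the previous paragraph: producing sheet identifications that simultaneously recover the cycles of the dual graph of $C$ and keep $C'$ connected, without introducing extra genus, while respecting that the ramification over each $q_i$ is at most simple.
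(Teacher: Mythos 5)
Your proposal is correct and follows essentially the same route as the paper's own proof: attach the bases $B_i$ to a central rational component (gluing directly when $r=2$), keep each $\pi_i$ over its arm, insert rational bridges over the external nodes together with auxiliary rational sheets to equalize the degree over every component of the base, and conclude via Theorem \ref{HarrMum}. Indeed, your bookkeeping --- degree-one trivial sheets, bridges of degree at most two handling index mismatches, and the count $b=\sum_i\bigl(2g(Y_i)+2k_i-2\bigr)$ pinning down $k=k_1+\cdots+k_r-\delta$ --- spells out precisely the combinatorial matching that the paper's proof treats only implicitly, so the step you flag as the remaining obstacle is resolved by the ``cross'' pattern you describe (bridges joined to the two branch points of each node, every other added component touching at most one $Y_i'$).
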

\begin{proof}
By Theorem \ref{HarrMum} it is sufficient to construct a $(k_{1}+\ldots+k_{r}-\delta)$-sheeted admissible cover $\pi:C^{\prime}\rightarrow B$
where $C^{\prime}$ is stably equivalent to $C$. 
To obtain the cover we proceed as follows.

For $i=1,\ldots,r$ let $q_{i}:=\pi_{i}({n_1'}^{\hspace{-.02in}(i)})=\ldots=\pi_{i}({n_{\delta_i}'}^{\hspace{-.04in}(i)})$. 
For $j=1,\ldots,\delta_i$ let $l_j^i$ be
the ramification index of $\pi_i$ at  ${n_j'}^{(i)}$, so that
\[
(\pi_{i})^{-1}(q_{i})=
l_{1}^{i}{n_1'}^{(i)} +\ldots+l_{\delta_i}^{i}{n_{\delta_i}'}^{\hspace{-.04in}(i)}+
\lambda_{1}^{i}\mi_1+\ldots+\lambda_{u_i}^{i}\mi_{u_i},
\]
where $\mi_1,\ldots,\mi_{u_i}\in Y_{i}'$. We glue (see Figure \ref{fig:prop6r3}):
\begin{itemize}
\item  a copy of $\mathbb{P}^{1}$, denoted by $B^{^{\prime}}$, passing through
$B_{1},\ldots,B_r$ at $q_{1},\ldots,q_r$, respectively, and thus linking the curves.  Denote by $B$ the genus-$0$ curve thus obtained;

\item whenever $n_{i_0,j_0}=n_{i_1,j_1}$, a copy of $\mathbb{P}^{1}$ passing through $Y_{i_0}'$ and $Y_{i_1}'$ at 
${n_{j_0}'}^{\hspace{-.04in}(i_0)}$ and ${n_{j_1}'}^{\hspace{-.04in}(i_1)}$,  and thus linking both curves,
mapping to $B^{\prime}$ via a degree-$(l_{j_0}^{i_0}+l_{j_1}^{i_1}-1)$ map ramified to order $l_{j_0}^{i_0}$ at ${n_{j_0}'}^{\hspace{-.04in}(i_0)}$ 
and to order $l_{j_1}^{i_1}$ at ${n_{j_1}'}^{\hspace{-.04in}(i_1)}$ and simply ramified elsewhere, where  $1\leq i_0,i_1\leq r$, $1\leq j_s\leq \delta_{i_s}$ and $s=1,2$ (call this copy $L_{j_0,j_1}^{i_0,i_1}$);

\item 
 a copy of $B_t$, mapping to  $B_t$ isomorphically,
at each point $m$ of $L_{j_0,j_1}^{i_0,i_1}$, distinct from ${n_{j_0}'}^{\hspace{-.04in}(i_0)}$ and ${n_{j_1}'}^{\hspace{-.04in}(i_1)}$,  
lying over $q_t$  by the map $L_{j_0,j_1}^{i_0,i_1}\rightarrow B'$, 
where  $1\leq i_0,i_1,t\leq r$ and  $1\leq j_s\leq \delta_{i_s}$, for $s=1,2$;

\item a copy of $\mathbb{P}^1$ at $\mi_{j}$, mapping to $B'$ via a degree-$\lambda_{j}^{i}$ map totally ramified at $\mi_j$,
unramified over $q_t$ for every $t\neq i$, 
and simply ramified elsewhere, 
for each $1\leq i,t\leq r$ and $1\leq j\leq u_i$ (call this copy $L_j^i$);

\item a copy of $B_t$ 
at each point $m$ of $L_j^i$ lying over $q_t$ by the map $L_{j}^i\rightarrow B'$, mapping to  $B_t$ isomorphically,
for each $1\leq i,t\leq r$ with $t\neq i$, and $1\leq j\leq u_i$.
\end{itemize}

\begin{figure}[h]
\centering
\includegraphics[height=2.2in]{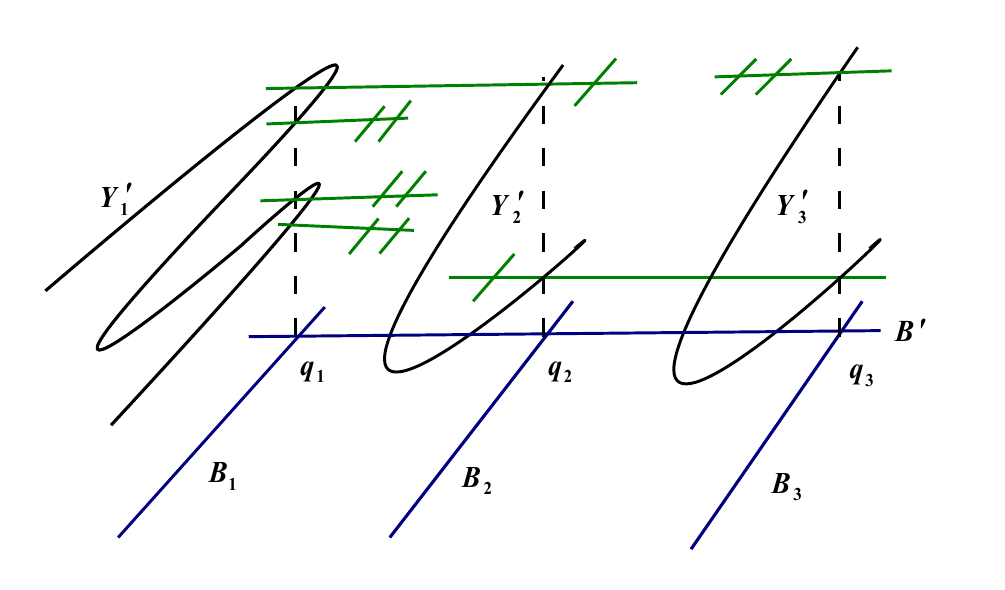}
\caption{Theorem \ref{red:nonconjug0} for $r=3$,  $\delta=2$, $k_1=4$ and $k_2=k_3=2$}
\label{fig:prop6r3}
\end{figure}

We thus obtain a nodal curve $C^{\prime}$ stably equivalent to $C$
 and a map $\pi:C^{\prime}\rightarrow B$ given by $\pi_{i} $ when
restricted to $Y_{i}'$, and by the maps described above when restricted to the
added rational components of $C^{\prime}$. 
By construction, $\pi$ has degree $k=k_{1}+\ldots+k_{r}-\delta$ and satisfies conditions (1) and (4) of an admissible cover.
As for condition (3), we  note that if $r\geq 3$ then $B$ is clearly stable. 

Now we examine the case $r=2$. 
We first note that, since  $\pi_{i_0}$ and $\pi_{i_1}$ are admissible covers, the map $L_{j_0,j_1}^{i_0,i_1}\rightarrow B'$   has degree at most 3.
Moreover, this map has degree equal to 3 
if and only if 
$l_{j_0}^{i_0}=2$ and $l_{j_1}^{i_1}=2$, that is, 
if and only if the map is ramified both at ${n_{j_0}'}^{\hspace{-.04in}(i_0)}$ and at ${n_{j_1}'}^{\hspace{-.04in}(i_1)}$. Since the ramifications are simple and, by Riemann-Hurwitz, the map has four ramification points, two of which are the nodes $q_{i_0}$ and $q_{i_1}$, we conclude that $B$ is stable in this case.

Now, $L_{j_0,j_1}^{i_0,i_1}\rightarrow B'$   has degree   2 if and only if either
$l_{j_0}^{i_0}=2$ and $l_{j_1}^{i_1}=1$, or 
$l_{j_0}^{i_0}=1$ and $l_{j_1}^{i_1}=2$, that is, 
if and only if the map is ramified at ${n_{j_0}'}^{\hspace{-.04in}(i_0)}$ and unramified at ${n_{j_1}'}^{\hspace{-.04in}(i_1)}$, or vice-versa. Again, since
the ramifications are simple and, by Riemann-Hurwitz, the map $L_{j_0,j_1}^{i_0,i_1}\rightarrow B'$  has two ramification points,  exactly one of which is a node of $B$, we conclude that $B$ is stable.

\begin{figure}[h]
\centering
\includegraphics[height=2.2in]{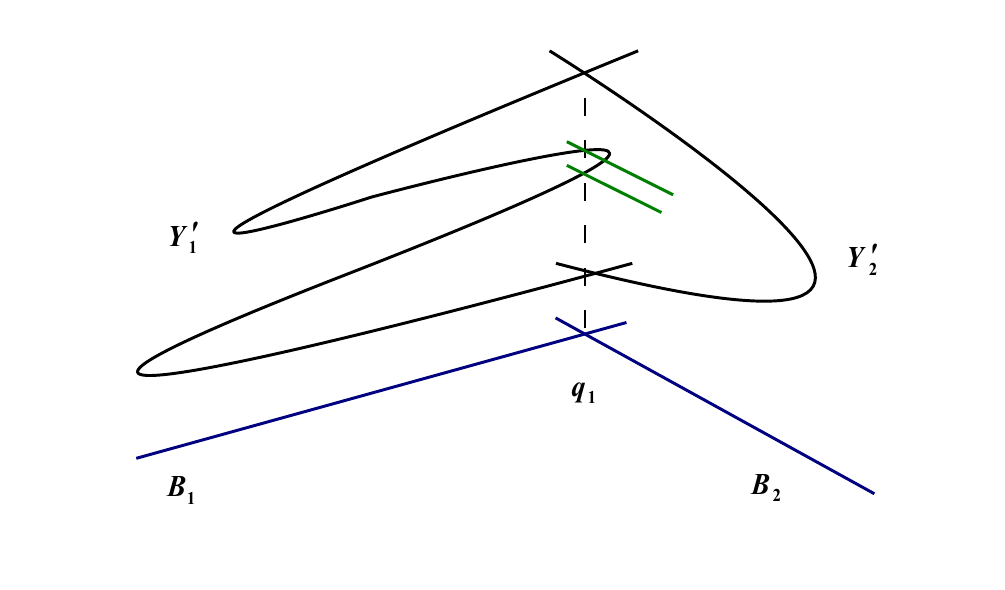}
\caption{Theorem \ref{red:nonconjug0} for $r=2$,  $\delta=2$, $k_1=4$ and $k_2=2$}
\label{fig:prop6r2}
\end{figure}

Finally, if the maps $L_{j_0,j_1}^{i_0,i_1}\rightarrow B'$  are isomorphisms, then there are no ramification points. In this case,
contracting $B'$ and the rational components of $C'$ mapping to it, one also obtains condition (3), see Figure \ref{fig:prop6r2}. The result now follows from Theorem \ref{prop:step2} (a).
\end{proof}

\begin{corollary}\label{cor:sepnode}
Let $C$ be a stable curve and let $n$ be a separating node of $C$ with associated tails $Y_1$ and $Y_2$. 
If $Y_i$ is $k_i$-gonal for $i=1,2$ then $C$ is $(k_1+k_2-1)$-gonal.
\end{corollary}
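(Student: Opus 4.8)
The plan is to recognize this corollary as the simplest instance of Theorem \ref{red:nonconjug0}, namely the case $r=2$. First I would record the geometric setup dictated by the hypothesis: since $n$ is a separating node with associated tails $Y_1$ and $Y_2$, by the definition of tails we have $Y_1\cap Y_1^c=\{n\}$ and likewise $Y_2\cap Y_2^c=\{n\}$, so that $C=Y_1\cup Y_2$ and $Y_1\cap Y_2=\{n\}$ consists of the single point $n$. In particular each subcurve $Y_i$ is connected and meets the rest of the curve in a unique point, so in the notation of Theorem \ref{red:nonconjug0} we have $\delta_1=\delta_2=1$ and hence $\delta=(\delta_1+\delta_2)/2=1$.

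Next I would invoke the gonality hypothesis together with the Harris--Mumford characterization. Since each $Y_i$ is $k_i$-gonal, Theorem \ref{HarrMum} guarantees a $k_i$-sheeted admissible cover $\pi_i\colon Y_i'\rightarrow B_i$ whose domain $Y_i'$ is stably equivalent to $Y_i$. This furnishes exactly the data required to feed into Theorem \ref{red:nonconjug0}.

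The key observation — and the only point that needs checking — is that the compatibility hypothesis of Theorem \ref{red:nonconjug0} is automatically satisfied here. That hypothesis asks that for each $i$ the branch points ${n_1'}^{(i)},\ldots,{n_{\delta_i}'}^{(i)}$ over the intersection points of $Y_i$ with $Y_i^c$ all have the same image under $\pi_i$. Since $\delta_i=1$ for $i=1,2$, there is only a single such branch point for each $i$, and so the condition $\pi_i({n_1'}^{(i)})=\cdots=\pi_i({n_{\delta_i}'}^{(i)})$ is vacuous. Thus no nontrivial verification is needed, which is why this reduction is essentially immediate rather than presenting a genuine obstacle.

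Finally I would apply Theorem \ref{red:nonconjug0} directly with $r=2$, $\delta=1$, and the covers $\pi_1,\pi_2$ above, concluding that $C$ is $(k_1+k_2-\delta)$-gonal, that is, $(k_1+k_2-1)$-gonal. If I wanted to make the argument fully self-contained I could also recall that in the $r=2$ case the gluing construction produces a chain in which the linking component $B'$ must be contracted (as noted at the end of the proof of Theorem \ref{red:nonconjug0}) to yield a genuine stable pointed base, but since I am permitted to cite Theorem \ref{red:nonconjug0} as given, the corollary follows with no further work.
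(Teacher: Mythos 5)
Your proof is correct and takes essentially the same approach as the paper, whose entire proof is the single line ``Follows from Theorem \ref{red:nonconjug0}.'' Your write-up simply makes explicit the details behind that reduction: the tails give $\delta_1=\delta_2=1$ so $\delta=1$, Theorem \ref{HarrMum} supplies the required admissible covers, and the compatibility hypothesis of Theorem \ref{red:nonconjug0} is vacuous when each $\delta_i=1$.
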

\begin{proof}
Follows from  Theorem \ref{red:nonconjug0}.
\end{proof}

\begin{theorem}\label{teo:uplowbound}
Let $C$ be a stable curve and let $Y_1$ be a connected subcurve of $C$.  Assume $Y_2:=Y_1^c$ is connected and let $\delta$ be the number of nodes in $Y_1\cap Y_2$.
If $Y_i$ is $k_i$-gonal for $i=1,2$ then $C$ is $k$-gonal for some $k$ satisfying 
$k_1+k_2-\delta\leq k\leq k_1+k_2+\delta-2$.
\end{theorem}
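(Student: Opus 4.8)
The plan is to prove the statement constructively, exhibiting a single integer $k$ inside the interval for which $C$ is $k$-gonal. Because a $k$-gonal stable curve is automatically $k'$-gonal for every $k'\ge k$ (the same smoothing witnesses both, since a smooth $k$-gonal curve is $(k+1)$-gonal), it suffices to produce one admissible cover whose number of sheets lands in the prescribed range; the two endpoints will correspond to the two extreme behaviours of the constructions already at hand.

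First I would settle the upper end by a telescoping construction. Choose one node $n_1$ among the $\delta$ nodes of $Y_1\cap Y_2$ and let $D_1$ be the curve obtained from $C$ by normalizing the other $\delta-1$ nodes, so that $Y_1$ and $Y_2$ are joined only along $n_1$; then $n_1$ is a \emph{separating} node of $D_1$ with tails $Y_1,Y_2$, and Corollary \ref{cor:sepnode} yields a genuine $(k_1+k_2-1)$-sheeted admissible cover $\pi^{(1)}$ of a curve stably equivalent to $D_1$. I then re-glue $n_2,\dots,n_\delta$ one at a time. Once $n_1$ has joined the two sides, each subsequent node $n_{j+1}$ is a \emph{non-separating} node of the partially glued curve, and its two branches lie in the subcurve playing the role of $C_n'$ in Theorem \ref{irred:lem}. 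Applying that theorem successively, each step either preserves the number of sheets (case (b), equal images) or raises it by one (case (a)), and in either case the output is again a full admissible cover, hence a legitimate input for the next step. After $\delta-1$ steps I obtain an admissible cover of a curve stably equivalent to $C$ with at most $(k_1+k_2-1)+(\delta-1)=k_1+k_2+\delta-2$ sheets, so by Theorem \ref{HarrMum} the curve $C$ is $k$-gonal with $k_1+k_2-1\le k\le k_1+k_2+\delta-2$.

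It remains only to observe that this interval sits inside the stated one: since $\delta\ge 1$ we have $k_1+k_2-\delta\le k_1+k_2-1$ and $k_1+k_2+\delta-2\le k_1+k_2+\delta-1$, so the $k$ just produced already satisfies $k_1+k_2-\delta\le k\le k_1+k_2+\delta-1$, which proves the existence assertion. The lower endpoint $k_1+k_2-\delta$ itself is attained whenever the covers of $Y_1$ and $Y_2$ can be chosen with all branches over the connecting nodes concurrent, by Theorem \ref{red:nonconjug0} with $r=2$; this explains why the interval is stated with that lower bound, although it is not needed for the mere existence of a $k$ in the range.

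The step I expect to require the most care is the inductive re-gluing. I must verify that at the moment $n_{j+1}$ is glued it really is non-separating (which holds because $n_1$ has already connected $Y_1$ and $Y_2$), that the cover produced at stage $j$ still satisfies hypotheses (1), (2) and (4) demanded by Theorem \ref{irred:lem} at stage $j+1$ (which holds because each output is a full admissible cover), and that the branch-point bookkeeping keeps the base curve stable at every stage, exactly as in the proofs of Theorems \ref{irred:lem} and \ref{red:nonconjug0}. Keeping track of which nodes fall into case (a) versus case (b) is what pins down the exact value of $k$ in the interval, but for the existence statement only the crude count above is required.
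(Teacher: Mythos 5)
Your proposal is correct and is essentially the paper's own argument: a base step handling part of the connecting nodes via the separating-node construction (Corollary \ref{cor:sepnode}, i.e.\ Theorem \ref{red:nonconjug0} with $r=2$), followed by re-gluing the remaining nodes one at a time with Theorem \ref{irred:lem}, keeping the sheet count inside the stated interval. The only divergence is bookkeeping: the paper's base step glues the whole set $J$ of nodes whose branch points have concurrent images under $\pi_1,\pi_2$, which forces every subsequent re-gluing into case (a) and pins down $k$ exactly as $k_1+k_2+\delta-2\varepsilon$ with $\varepsilon=|J|\geq 1$, whereas you glue a single node and allow cases (a) and (b) indifferently, locating $k$ only in the subinterval $[k_1+k_2-1,\,k_1+k_2+\delta-2]$ of the stated range --- which still suffices for the existence claim.
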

\begin{proof}
Let $n_1,\ldots,n_\delta$ be the nodes in $Y_1\cap Y_2$. 
As in Theorem \ref{red:nonconjug0}, let
  $\pi_{i}:Y_{i}'\rightarrow B_{i}$ be a $k_{i}$-sheeted admissible cover, where $Y_{i}^{\prime}$ is stably equivalent to $Y_{i}$.  Let
  $\ni_{j}$ be the branch over $n_{j}$ on $Y_{i}$ and let ${n_j'}^{(i)}$ be a smooth point on $Y_i'$ lying over $\ni_j$, for $j=1,\ldots,\delta$ and $i=1,2$.

For $i=1,2$ let $q_i=\pi_i({n_1'}^{(i)})$ and 
\[J=\{j\ |\ \pi_1({n_j'}^{(1)})=q_1\text{ and }  \pi_2({n_j'}^{(2)})=q_2\}.\]
Let $C_J$ be the curve obtained by normalizing all the nodes $n_j$ of $C$ such that $j\not\in J$. 
If $\varepsilon$ is the number of indices in $J$, then $1\leq \varepsilon\leq \delta$ and, 
by Theorem \ref{red:nonconjug0}, $C_J$ is $(k_1+k_2-\varepsilon)$-gonal. 

Let $\gamma=\delta-\varepsilon$ and $\{j_1,\ldots,j_{\gamma}\}=\{1,\ldots,\delta\}\smallsetminus J$. 
Let $C_0=C$ and, for each $1\leq e\leq \gamma$, denote by  
$C_{e}$  the curve obtained normalizing the nodes $n_{j_1},\ldots,n_{j_e}$ of $C$.  Note that $C_\gamma=C_J$. The result now follows
 follows applying Theorem \ref{irred:lem} (a) consecutively to the nodes $n_{j_{\gamma}},\ldots,n_{j_1}$ of $C_{\gamma-1},\ldots,C_0$, respectively.
 \end{proof}

A nodal curve $C$ is called \emph{treelike} if the normalization of $C$ at its internal nodes is a curve of compact type.  Equivalently, a nodal curve  $C$ having $p$ irreducible components is treelike if the number of external nodes of $C$ is $p-1$.

\begin{corollary}\label{cor:bound}
Let $C$ be a stable curve with irreducible components $C_1,\ldots,C_p$ and $\delta$ external nodes.
If $C_i$ is $k_i$-gonal for $i=1,\ldots,p$ then $C$ is $k$-gonal for some $k$ satisfying $k_1+\ldots+k_p-\delta\leq k\leq k_1+\ldots+k_p+\delta-2(p-1)$.
In particular, if $C$ is a treelike curve, then $k=k_1+\ldots+k_p-\delta$.
\end{corollary}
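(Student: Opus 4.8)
The plan is to reduce to the two-component bound already established in Theorem \ref{teo:uplowbound} and to iterate it, arguing by induction on the number $p$ of irreducible components (we may assume $p\geq 2$, since $C$ is reducible). The base case $p=2$ is nothing but Theorem \ref{teo:uplowbound} applied with $Y_1=C_1$ and $Y_2=C_2$: here the external nodes of $C$ are exactly the nodes of $C_1\cap C_2$, so the two meanings of $\delta$ coincide and the asserted interval $k_1+k_2-\delta\leq k\leq k_1+k_2+\delta-1$ is immediate.

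For the inductive step, with $p\geq 3$, I would first peel off a single component. Since $C$ is connected its dual graph is connected, so after relabeling I may take $C_p$ to correspond to a leaf of a spanning tree of the dual graph; then $Y_1:=C_1\cup\ldots\cup C_{p-1}$ is again connected, and I set $Y_2:=C_p$. Writing $\delta'$ for the number of nodes in $Y_1\cap Y_2$ and $\delta_1$ for the number of external nodes of $Y_1$, the key combinatorial point is that each external node of $C$ either joins two components lying inside $Y_1$ (and is then counted by $\delta_1$) or joins $C_p$ to a component of $Y_1$ (and is then counted by $\delta'$), while every self-node of a component stays internal; hence $\delta=\delta_1+\delta'$.

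Next I would invoke the induction hypothesis on $Y_1$, which has $p-1$ components and $\delta_1$ external nodes, to conclude that $Y_1$ is $k'$-gonal for some $k'$ with $k_1+\ldots+k_{p-1}-\delta_1\leq k'\leq k_1+\ldots+k_{p-1}+\delta_1-1$. Applying Theorem \ref{teo:uplowbound} to the splitting $C=Y_1\cup Y_2$, with $Y_1$ being $k'$-gonal and $Y_2=C_p$ being $k_p$-gonal, yields that $C$ is $k$-gonal for some $k$ with $k'+k_p-\delta'\leq k\leq k'+k_p+\delta'-1$. Substituting the bounds on $k'$ and using $\delta=\delta_1+\delta'$ gives $k_1+\ldots+k_p-\delta\leq k\leq k_1+\ldots+k_p+\delta-2$, which sits inside the claimed interval (indeed with one to spare at the top), completing the induction.

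The step I expect to require the most care is not the telescoping of the numerical bounds, which is straightforward, but making the reduction fully legitimate: Theorem \ref{teo:uplowbound} is applied with $Y_1$ a proper subcurve that need not itself satisfy the stability conditions. This is harmless because gonality is detected through admissible covers of a stably equivalent curve (Theorem \ref{HarrMum}), so one may replace $Y_1$ by its stabilization without changing the gonalities involved, and the genus and ramification bookkeeping needed to keep the target base curve stable is already internal to Theorem \ref{teo:uplowbound} and need not be repeated here. The only other thing to verify carefully is the existence of the peelable component, which is guaranteed by taking a leaf of a spanning tree of the (connected) dual graph.
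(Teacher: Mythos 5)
Your proposal is correct and takes essentially the same approach as the paper: the paper's proof is the one-line ``Follows from Theorem \ref{teo:uplowbound}'', and your induction on the number of components --- peeling off a leaf of the dual graph, applying Theorem \ref{teo:uplowbound} to $C=Y_1\cup C_p$, and telescoping the bounds via $\delta=\delta_1+\delta'$ --- is exactly the intended expansion of that citation. The one caveat concerns your stabilization remark: contracting $Y_1$ to its stable model can merge or destroy components and change the external-node count (so it does change ``the gonalities involved''); the cleaner fix, which is also the paper's implicit convention since Theorem \ref{teo:uplowbound} itself assumes gonality of possibly non-stable subcurves, is to read ``$k$-gonal'' throughout as ``admits a $k$-sheeted admissible cover of a stably equivalent curve'', under which your induction applies verbatim without any stabilization step.
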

\begin{proof}
Follows from Theorem \ref{teo:uplowbound}.
\end{proof}

The upper bound in Corollary \ref{cor:bound} can be improved under some conditions.

\begin{proposition}\label{red:conjugate}
Let $C$ be a stable curve with $p$ smooth irreducible components $C_1,\ldots,C_p$.
For each node $n_j$ of $C$, let $C_{j(1)}$ and $C_{j(2)}$   be the irreducible components of $C$ containing $n_j$ 
and denote by $\n1_j$ and $\n2_j$ the branches of $n_j$ on  $C_{j(1)}$ and $C_{j(2)}$, respectively.  
Assume that:
\begin{enumerate}
\item there is a degree-$k_s$ map $\pi_{s}:C_{s}\rightarrow \Pbb^1$, for each irreducible component $C_s$ of $C$;
\item  $\pi_{j(1)}(\n1_{j})=\pi_{j(2)}(\n2_{j})$ for every  node $n_j$ of $C$, 
\item $\pi_{j(i)}(\ni_{j})\neq\pi_{j'(i')}(\n{i'}_{j'})$ for every  pair of distinct nodes $n_j,\,n_{j'}$ of $C$ and $i,i'\in\{1,2\}$.
\end{enumerate}
Then $C$ is $(k_1+\ldots+k_p)-$gonal.
\end{proposition}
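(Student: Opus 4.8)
The plan is to reduce Proposition \ref{red:conjugate} to a direct application of Theorem \ref{red:nonconjug0} with $r=p$ and $Y_i=C_i$, where the real content is to verify the key hypothesis that each $\pi_i$ can be upgraded to an admissible cover on which all the branch points lying over nodes are sent to a \emph{single} point of $B_i$. Since each $C_s$ is smooth and irreducible and admits a $k_s$-degree map $\pi_s\colon C_s\rightarrow\Pbb^1$, by Theorem \ref{HarrMum} (together with the standard construction underlying it) I may realize $\pi_s$ as a $k_s$-sheeted admissible cover $Y_s'\rightarrow B_s$ with $Y_s'$ stably equivalent to $C_s$. To feed Theorem \ref{red:nonconjug0} the hypothesis $\pi_i({n_1'}^{(i)})=\ldots=\pi_i({n_{\delta_i}'}^{(i)})$, I must arrange that all the images $\pi_i(\ni_j)$ coincide for a fixed $i$; this is exactly where assumptions (2) and (3) are used, but they alone are not enough, so the construction must actively move these branch points to a common image.

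First I would record the set-up: for each component $C_i$, the nodes of $C$ lying on $C_i$ give branch points $\ni_1,\ldots,\ni_{\delta_i}$ in $C_i$, and by hypothesis (3) the images $\pi_i(\ni_1),\ldots,\pi_i(\ni_{\delta_i})$ are \emph{pairwise distinct} points of $\Pbb^1$. Next I would perform a gluing step analogous to Step 1 of Theorem \ref{irred:lem}: over each distinct value $\pi_i(\ni_j)$ in the target, glue a chain of $\Pbb^1$'s to $B_i$ and corresponding rational tails to $Y_i'$ so as to transport the point $\ni_j$ to a single chosen point $q_i\in B_i$, while keeping the map admissible. Concretely, the $\delta_i$ distinct target points get connected through an auxiliary $\Pbb^1$ so that, on the modified cover $\widetilde\pi_i\colon\widetilde Y_i'\rightarrow\widetilde B_i$, all branches ${n_j'}^{(i)}$ satisfy $\widetilde\pi_i({n_1'}^{(i)})=\ldots=\widetilde\pi_i({n_{\delta_i}'}^{(i)})=q_i$. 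This modified $\widetilde\pi_i$ is still a $k_i$-sheeted admissible cover (no sheets are added because we are rearranging fibers over existing points, not increasing the degree), and $\widetilde Y_i'$ remains stably equivalent to $C_i$.

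With all hypotheses of Theorem \ref{red:nonconjug0} now in force for the family $\widetilde\pi_1,\ldots,\widetilde\pi_p$, I would invoke that theorem directly to conclude that $C$ is $(k_1+\ldots+k_p-\delta)$-gonal, where $\delta=(\delta_1+\ldots+\delta_p)/2$ is the number of nodes of $C$. However, the proposition claims gonality $k_1+\ldots+k_p$ with \emph{no} subtraction of $\delta$, so this cannot be the whole story: assumptions (2) and (3) must be what force the count to come out differently, and I expect that under these conditions the gluing over the nodes does \emph{not} merge sheets in the way it does in Theorem \ref{red:nonconjug0}. The main obstacle, therefore, is reconciling the degree count: whereas Theorem \ref{red:nonconjug0} links the $Y_i'$ through a single auxiliary $B'$ causing branches over a shared node to be identified (hence the $-\delta$), here hypothesis (3) — that distinct branches have distinct images $\pi_{j(i)}(\ni_j)$ — means each node must be handled by a \emph{separate} auxiliary $\Pbb^1$ that genuinely raises the degree by one at each node, exactly as in Theorem \ref{irred:lem} (a).

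Thus the correct plan is a hybrid: I would treat each node $n_j$ of $C$ by a local gluing in the style of Theorem \ref{irred:lem} (a), attaching over the common value $\pi_{j(1)}(\n1_j)=\pi_{j(2)}(\n2_j)$ (equal by hypothesis (2)) a copy of $\Pbb^1$ in the target and appropriate rational bridges in the source, joining the branch ${n'}^{(1)}_j\in C_{j(1)}'$ to ${n'}^{(2)}_j\in C_{j(2)}'$ with a map ramified to the two respective orders. Hypothesis (3) guarantees these constructions at different nodes use distinct fibers and therefore do not interfere, so they can be carried out simultaneously and independently. Each such bridge, being the analogue of the $(l_1+l_2)$-degree connector in Theorem \ref{irred:lem} (b), contributes sheets in a way that, summed over all $\delta$ nodes, assembles the disjoint covers of degrees $k_1,\ldots,k_p$ into a single admissible cover of a curve $C'$ stably equivalent to $C$. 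The final degree bookkeeping — checking via $b=2g+2k-2$ that the total number of ramification points is consistent with $k=k_1+\ldots+k_p$ and that the resulting pointed genus-$0$ base is stable after contracting superfluous rational components — is the step I expect to require the most care, but it is routine in the same spirit as the ramification counts already carried out in Theorems \ref{irred:lem} and \ref{red:nonconjug0}. Invoking Theorem \ref{HarrMum} then yields that $C$ is $(k_1+\ldots+k_p)$-gonal.
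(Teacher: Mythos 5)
Your final plan, after you discard the reduction to Theorem \ref{red:nonconjug0}, is essentially the paper's own construction: regard all the $\pi_s$ as covers of one and the same $\Pbb^1$, and at each node $n_j$ glue a copy of $\Pbb^1$ to the base at the common image $q_j=\pi_{j(1)}(n^{(1)}_j)=\pi_{j(2)}(n^{(2)}_j)$, together with a rational bridge through $n^{(1)}_j$ and $n^{(2)}_j$ mapping to that copy with degree $l_{j(1)}+l_{j(2)}$ and ramified to orders $l_{j(1)}$, $l_{j(2)}$ at the attachment points; hypothesis (3) keeps the gluings at distinct nodes disjoint, and Theorem \ref{HarrMum} concludes. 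The opening detour is harmless since you abandon it yourself, but two points in the final plan need repair.

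First, your degree accounting is misstated. You say each node is handled by an auxiliary $\Pbb^1$ that ``genuinely raises the degree by one at each node, exactly as in Theorem \ref{irred:lem} (a)''. What you actually describe is the Theorem \ref{irred:lem} (b) bridge, which raises nothing; importing the (a)-construction literally (it includes a degree-one copy of the whole base, copy III) at each of the $\delta$ nodes would yield degree $k_1+\ldots+k_p+\delta$, not $k_1+\ldots+k_p$. The true reason the answer is $k_1+\ldots+k_p$ is more elementary: since every $C_s$ maps to the \emph{same} $\Pbb^1$, the disjoint union $C_1\sqcup\ldots\sqcup C_p\rightarrow\Pbb^1$ already has degree $k_1+\ldots+k_p$ over every point, and the gluings need only preserve this over each component of the new base. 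Second, and this is the genuine gap, a base copy plus a bridge is not yet an admissible cover: once $q_j$ becomes a node of $B'$, condition (1) of the definition ($\pi^{-1}(B'^{\text{sing}})=C'^{\text{sing}}$) forces \emph{every} point lying over $q_j$ to be a node of $C'$. So you must also glue totally ramified rational tails at the remaining points of $\pi_{j(1)}^{-1}(q_j)$ and $\pi_{j(2)}^{-1}(q_j)$, and --- this is the part with no analogue in Theorem \ref{irred:lem}, which the paper treats as a separate step --- at every point of $\pi_i^{-1}(q_j)$ for each component $C_i$ with $i\notin\{j(1),j(2)\}$, since those components cover the same base and their fibers over $q_j$ consist of smooth points. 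These tails are exactly what brings the degree over the copy at $q_j$ up to $k_1+\ldots+k_p$ and makes the map finite and admissible; they are not subsumed by your ``routine bookkeeping'' of ramification counts and stability of the base.
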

\begin{proof}
By Theorem \ref{HarrMum} it is
enough to construct a $\left(  k_{1}+\ldots+k_{p}\right)  $-sheeted
admissible cover $\pi:C^{\prime}\rightarrow B'$ where $C^{\prime}$ is stably
equivalent to $C$. 
Set $B=\Pbb^1$ so that the maps $\pi_{s}$ have image in $B$.
For each node $n_j$ of $C$ we proceed as follows. Set $q_{j}:=\pi_{j(i)}(\ni_j)\in B$ and let $l_{j(i)}$ be the ramification index of $\pi_{j(i)}$ at $\ni_j$, for $i=1,2$, so that
\[(\pi_{j(i)})^{-1}(q_{j}) = l_{j(i)}\ni_{j}+ \lambda_{j(i),1}m_{j(i),1}+\ldots+\lambda_{j(i),u_{i,j}}m_{j(i),u_{i,j}}.\]
where $m_{j(i),1},\ldots,m_{j(i),u_{i,j}}\in C_{j(i)}$. 
We glue (see Figure \ref{fig:prop11}):
\begin{itemize}
\item a copy of $\Pbb^1$ to $B$ at $q_{j}$;

\item a copy of $\mathbb{P}^{1}$ passing through $C_{j(1)}$ and $C_{j(2)}$ at
$\n1_{j}$ and $\n2_{j}$, thus 
linking the two curves, and 
 mapping to the copy at $q_{j}$
via a degree-$(  l_{j(1)}+l_{j(2)})$ map ramified to order $l_{j(1)}$ at $\n1_j$ and to order $l_{j(2)}$ at $\n2_j$
and simply ramified away from  $\n1_j$ and  $\n2_j$;

\item a copy of $\mathbb{P}^{1}$ at every $m_{j(i),s}$ 
mapping to the copy at $q_{j}$ via a degree-$\lambda_{j(i),s}$ map totally ramified at $m_{j(i),s}$ and simply ramified elsewhere.
\end{itemize}

Moreover, for each $j$ and each $i$ such that $i\not\in\{j(1),j(2)\}$ set
\[\pi_i^{-1}(q_j)=
\nu^{i,j}_1m^{i,j}_1+\ldots+
\nu^{i,j}_{u_{i,j}} m^{i,j}_{u_{i,j}},
\]
where $m^{i,j}_{1},\ldots,m^{i,j}_{u_{i,j}}\in C_i$.
Then we glue a copy of $\mathbb P^1$ at every $m^{i,j}_\ell$ mapping to the copy at $q_j$ via a degree-$\nu^{i,j}_\ell$ map totally ramified at $m^{i,j}_\ell$ and simply ramified elsewhere.

\begin{figure}[h!]
\centering
\includegraphics[height=2.00in]{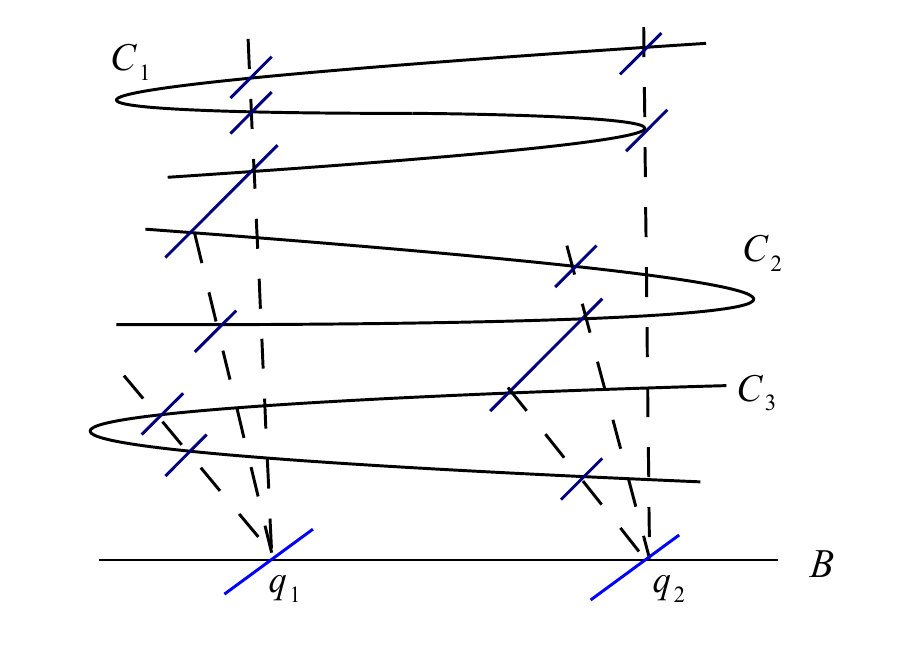} 
\caption{Proposition \ref{red:conjugate} for $p=3$, $k_1=3$ and $k_2=k_3=2$}
\label{fig:prop11}
\end{figure}

We thus obtain from $C$ a
curve $C^{\prime}$ stably equivalent to $C$, containing $C_s$ for every $s=1,\ldots,p$, 
and a map $\pi:C^{\prime}\rightarrow B'$ given by $\pi_{s} $ when
restricted to $C_{s}$ and by the maps described above when restricted to the
added rational components of $C^{\prime}$. By construction, $\pi$ is a map of degree $k_{1}+\ldots+k_{p}$  satisfying conditions (1), (3) and (4) of an admissible cover. The result then follows from Theorem \ref{prop:step2} (a).
\end{proof}

\section{Maps of Hurwitz schemes}\label{sec:hurwitzscheme}

The Hurwitz scheme $H_{k,g}$ in its simplest form parametrizes families of $k$-sheeted
covers of $\mathbb{P}^{1}$ with $b=2g+2k-2$ ordinary branch points. More precisely, 
$H_{k,g}$ is the moduli space of pairs $(  C,\pi)$, where 
$C$ is a smooth curve of genus $g$ and $\pi\colon C\rightarrow\mathbb{P}^{1}$ is finite of degree $k$
branched only over $b$ distinct points $Q_{1},\ldots,Q_{b}\in\mathbb{P}^{1}$.
By \cite{F} and \cite{M}, $H_{k,g}$ is itself a finite \'{e}tale cover of the open subset $U$ of $(\mathbb P^1)^{b-3}$ parametrizing 
 sequences $\{Q_{i}\}$ where $Q_i\neq Q_j$ for $i\neq j$.

The scheme $H_{k,g}$ maps naturally to the moduli space ${M}_{g}$ of
smooth curves of genus $g$ via a forgetful map $\sigma$ whose
 image contains an open subset of the locus in ${M}_{g}$ of $k$-gonal curves. A good compactification $\overline{H}_{k,g}$ of $H_{k,g}$
should extend $\sigma$ to a morphism $\overline{\sigma}\colon\overline{H}_{k,g}\rightarrow\overline{{M}}_{g}$, 
where $\overline{ M}_g$ is the moduli space of stable curves of genus $g$.

This compactification can be done by means of Knudsen's theory
of $b$-pointed curves, see \cite{K2} and \cite{HM82}. Knudsen introduced a smooth projective compatification $\mathcal P$ of $U$ parametrizing $b$-pointed stable curves $C$ of genus 0, 
up to isomorphism. We note that $U$ is an open dense subset of $\mathcal P$ corresponding to $b$-pointed irreducible curves of genus 0.

Let $\overline{\mathcal{H}}_{k,g}$ be the
functor that takes a scheme $S$ to the set of \emph{genus-$g$ $k$-sheeted admissible covers over $S$}, that is, the set of morphisms $\pi\colon \mathcal C\rightarrow \mathcal B$ where
$\mathcal C$ and $\mathcal B$ are families of  curves over $S$ of genera $g$ and $0$, respectively, and such that, for each $s\in S$, the map $\pi(s)\colon \mathcal C(s)\rightarrow \mathcal B(s)$ is a  $k$-sheeted admissible cover.
By  \cite[Theorem 4]{HM82} and \cite[Theorem 2.9]{M}, the functor $\overline{\mathcal{H}}_{k,g}$ is coarsely
represented by a projective irreducible scheme $\overline{H}_{k,g}$ finite over $\mathcal{P}$. 
In
particular,  $\overline{H}_{k,g}$ contains  ${H}_{k,g}$ as an open and dense subset and
\[\dim\overline{H}_{k,g}=2g+2k-5.\]

Now we consider Hurwitz schemes with  marked points. Let 
$\overline{\mathcal{H}}_{k,g,\ell_1,\ldots,\ell_n}$ be the functor taking a scheme $S$ 
to the set of \emph{genus-$g$ $k$-sheeted $(\ell_1,\ldots,\ell_n)$-pointed admissible covers over $S$}, that is,  the set of tuples 
\[(\pi\colon \mathcal C\rightarrow \mathcal B; p^1_{1},\ldots, p^1_{\ell_1},  \ldots,  p_1^n,\ldots, p_{\ell_n}^n)\] 
where
$\pi$ is a genus-$g$ $k$-sheeted admissible cover over $S$ 
and  \[(\mathcal C,p^1_{1},\ldots,p^1_{\ell_1},\ldots,
p_1^n,\ldots,p_{\ell_n}^n)\] is a $(\ell_1+\ldots+\ell_n)$-pointed stable curve of genus $g$ over $S$  such that 
\begin{equation}\label{eq:pointedhurwtz}
\pi(p^i_{1})=\ldots=\pi(p^i_{\ell_i})
\quad \text{and}\quad
\pi(p^i_{1})\neq\pi(p^j_{1})
\end{equation}
for $1\leq i\neq j\leq n$.
Let also ${\mathcal H}_{k,g,\ell_1,\ldots,\ell_n}$ be  the subfunctor  of $\overline{\mathcal{H}}_{k,g,\ell_1,\ldots,\ell_n}$ 
taking a scheme $S$ 
to the set of genus-$g$ $k$-sheeted $(\ell_1,\ldots,\ell_n)$-pointed admissible covers over $S$ as above such that $\mathcal C$ and $\mathcal B$ are smooth curves over $S$. 
Moreover, let  $\overline{\mathcal H}_{k,g,\ell_1,\ldots,\ell_n}^{un}$ (resp. ${\mathcal H}_{k,g,\ell_1,\ldots,\ell_n}^{un}$) be  the subfunctor  of $\overline{\mathcal{H}}_{k,g,\ell_1,\ldots,\ell_n}$ (resp. ${\mathcal{H}}_{k,g,\ell_1,\ldots,\ell_n}$)
taking a scheme $S$ 
to the set of genus-$g$ $k$-sheeted $(\ell_1,\ldots,\ell_n)$-pointed admissible covers over $S$ as above such that the admissible cover is unramified over the marked points.

\begin{theorem}\label{teo:repr}
For $g,k\geq 2$ and $1\leq\ell_1,\ldots,\ell_n\leq k$, 
the functor $\overline{\mathcal{H}}_{k,g,\ell_1\ldots,\ell_n}$ is coarsely represented by a scheme $\overline{H}_{k,g,\ell_1,\ldots,\ell_n}$ of dimension $2g+2k-5+n$.  

Moreover, ${\mathcal H}_{k,g,\ell_1,\ldots,\ell_n}$ (resp. $\overline{\mathcal H}_{k,g,\ell_1,\ldots,\ell_n}^{un}$, resp. ${\mathcal H}_{k,g,\ell_1,\ldots,\ell_n}^{un}$) 
is coarsely represented by a scheme ${H}_{k,g,\ell_1,\ldots,\ell_n}$ (resp. $\overline{H}_{k,g,\ell_1,\ldots,\ell_n}^{un}$, resp. ${H}_{k,g,\ell_1,\ldots,\ell_n}^{un}$) open and dense in $\overline{H}_{k,g,\ell_1,\ldots,\ell_n}$. 
\end{theorem}
\begin{proof}
The proof of the existence of $\overline{H}_{k,g,\ell_1,\ldots,\ell_n}$
follows the steps  of \cite[Theorem 4, p. 58]{HM82}. 
Now, consider the  map 
\[ 
\zeta:\overline H_{k,g,\ell_1,\ldots,\ell_n} \rightarrow \overline H_{k,g}\times_{\overline M_g}\overline M_{g,n}
\]
taking an admissible cover $\pi\colon C\rightarrow B$ with marked points
$ p^1_{1},\ldots, p^1_{\ell_1}, 
\ldots, p_1^n,\ldots, p_{\ell_n}^n$ satisfying \eqref{eq:pointedhurwtz}
on $C$ to the same admissible cover $\pi\colon C\rightarrow B$  together with the stabilization of  $(C, p^1_{1},\ldots,p_1^n)$, that is, the pointed stable curve 
obtained contracting the rational components $E$ of $C$ such that the number of points where $E$ meets $E^c$ plus the number of indices $i$ such that $p_1^i$ lies on $E$ is smaller than three. 
Since $\zeta$ is dominant and quasi-finite, then the dimension of
$\overline H_{k,g,\ell_1,\ldots,\ell_n}$ is equal to that of $\overline H_{k,g}\times_{\overline M_g}\overline M_{g,n}$, that is,
 $2g+2k-5+n$.

For the last assertion, it suffices to note that the set of smooth curves is open and dense in the moduli of stable curves. Moreover, given an admissible cover $\pi\colon C\rightarrow B$, the set of points in $C$ over which $\pi$ is unramified is open and dense in $C$. 
\end{proof}

We now turn our attention to the morphisms of Hurwitz schemes induced by the admissible covers produced in Section 3. 
First let 
\[
\Gamma\colon\overline{\mathcal{H}}_{k,g,1}^{un}\rightarrow\overline{\mathcal{H}}_{k+1,g}
\]
be the  natural transformation  given, for a scheme $S$, by taking a $k$-sheeted admissible cover $\pi\colon\mathcal C\rightarrow \mathcal B$ 
over $S$ where $\mathcal C$ is a genus-$g$ curve over $S$ with one marked point to  a $(k+1)$-sheeted admissible cover 
$\pi'\colon\mathcal C'\rightarrow \mathcal B^{\prime}$ where $\mathcal C'$,  $\mathcal B'$ and  $\pi'$ are constructed as in Theorem \ref{prop:step2} (b).

Let 
\[
\Phi\colon\overline{\mathcal{H}}^{un}_{k,g,1,1}\rightarrow\overline{\mathcal{H}}_{k+1,g+1}
\]
be the  natural transformation  given, for a scheme $S$, by taking a $k$-sheeted admissible cover $\pi\colon\mathcal C\rightarrow \mathcal B$ 
over $S$ where $\mathcal C$ is a genus-$g$ curve over $S$ with two marked points $p_1$ and $p_2$ such that $\pi(p_1)\neq\pi(p_2)$ 
to 
 a $(k+1)$-sheeted admissible cover 
$\pi'\colon\mathcal C'\rightarrow \mathcal B^{\prime}$ where $\mathcal C'$,  $\mathcal B'$ and  $\pi'$ are constructed as in Theorem \ref{irred:lem} (a).

Let 
\[
\Psi\colon\overline{\mathcal{H}}^{un}_{k,g,2}\rightarrow\overline{\mathcal{H}}_{k,g+1}
\]
be the natural transformation given, for a scheme $S$, by taking a $k$-sheeted admissible cover $\pi\colon\mathcal C\rightarrow \mathcal B$ 
 over $S$ where $\mathcal C$ is a genus-$g$ curve over $S$ with two marked points $p_1$ and $p_2$ such that $\pi(p_1)=\pi(p_2)$ 
to 
 a $k$-sheeted admissible cover 
$\pi'\colon\mathcal C'\rightarrow \mathcal B^{\prime}$ where $\mathcal C'$,  $\mathcal B'$ and  $\pi'$ are constructed as in Theorem \ref{irred:lem} (b).

And finally, let
\[\Lambda\colon \overline{\mathcal{H}}_{k_{1},g_{1},\delta}^{un}
\times \overline{\mathcal{H}}_{k_{2},g_{2},\delta}^{un} 
\rightarrow \overline{\mathcal{H}}_{k_{1}+k_{2}-\delta,g_{1}+g_{2}+\delta-1}\]
be the   natural transformation given, for a scheme $S$, by taking a $k_i$-sheeted admissible cover $\pi_i\colon \mathcal C_i\rightarrow \mathcal B_i$ over $S$ where $\mathcal C_i$ is a genus-$g_i$ curve over $S$ with $\delta$ marked points $n_1^{(i)}, \ldots,n_\delta^{(i)}$ such that $\pi_i(n_1^{(i)})= \ldots=\pi_i(n_\delta^{(i)})$, for $i=1,2$, to a $(k_1+k_2-\delta)$-sheeted admissible cover $\pi\colon\mathcal C\rightarrow\mathcal B$ where $\mathcal C$,  $\mathcal B$ and $\pi$ are constructed as in Theorem \ref{red:nonconjug0} for $r=2$.

We note  that in order to view the  construction of Theorem \ref{red:nonconjug0} for gluing
 $r\geq 3$  admissible covers $\pi_i:C_i\rightarrow B_i$  as a natural transformation, one would need to consider some extra data to take into account  the choice of the gluing, that is, which marked points of each   $C_i$ should be glued to which marked points of each $C_{j}$, for $i,j=1,\ldots, r$. 
Each choice would thus give a different natural transformation $\Lambda$, rendering the notation  very cumbersome.
Although we believe that, as in the case $r=2$ treated in the next result, every such $\Lambda$ should induce a generically injective morphism of pointed Hurwitz schemes, the general case for any $r$ will not be treated here.

\begin{theorem}
\label{thm:geninj}
For $g\geq 2$ and $k,\delta\geq 1$,
the natural transformations $\Gamma$, $\Phi$, $\Psi$ and $\Lambda$ induce generically injective morphisms
\[\gamma\colon \overline{H}_{k,g,1}^{un}\rightarrow\overline{H}_{k+1,g},
\quad\quad
\phi\colon \overline{H}_{k,g,1,1}^{un}\rightarrow\overline{H}_{k+1,g+1},
\quad\quad
\psi\colon \overline{H}_{k,g,2}^{un}\rightarrow\overline{H}_{k,g+1},\]
and
\[\lambda\colon Symm^2(\overline{H}_{k,g,\delta}^{un})
\rightarrow \overline{H}_{2k-\delta,2g+\delta-1},\]
where $Symm^2(\overline{H}_{k,g,\delta}^{un})$ is the symmetric product. For $g_1,g_2\geq2$ and $k_1,k_2\geq 1$ such that $(g_1,k_1)\neq(g_2,k_2)$,
the natural transformation  $\Lambda$ induces a generically injective morphism
\[\lambda\colon \overline{H}_{k_{1},g_{1},\delta}^{un}
\times
\overline{H}_{k_{2},g_{2},\delta}^{un} 
\rightarrow 
\overline{H}_{k_{1}+k_{2}-\delta,g_{1}+g_{2}+\delta-1},\]
\end{theorem}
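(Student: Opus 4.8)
The plan is to prove generic injectivity of each morphism by constructing, at least on a dense open subset, an explicit inverse that recovers the input data from the output admissible cover. The key observation driving everything is that the constructions in Theorems \ref{irred:lem} and \ref{red:nonconjug0} attach a controlled collection of rational ``linking'' and ``tail'' components to the original covers over a small number of special branch points, and that for a general member of the source these attached components are recognizable in the target. Concretely, I would first establish that a general point of each target Hurwitz scheme has a totally unramified behaviour away from the finitely many distinguished branch points coming from the construction, so that the boundary divisor structure of $\overline{B}'$ (i.e.\ the number and configuration of components of genus-$0$ base) pins down which branch points $q_i$ were used to glue.

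First I would treat $\phi\colon \overline{H}_{k,g,1,1}\rightarrow\overline{H}_{k+1,g+1}$. Given a cover $\pi'\colon C'\rightarrow B'$ in the image, the construction of Theorem \ref{irred:lem}(a) produces a base $B'$ with a specific dual graph (a chain involving the two copies of $\mathbb{P}^1$ at $q_1,q_2$ plus copy III linking them), so for a general image point I would identify those rational tails, contract them, and recover $B$ together with the two distinguished points $q_1\neq q_2$. Over $B$ the restriction of $\pi'$ to the non-tail part recovers $\pi\colon C\rightarrow B$ and the two marked points $p_1,p_2\in C$ as the ramification points of $\pi'$ on copies I and II (the two branches being reglued). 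The main content is to verify that for a general source datum these reconstructions are unambiguous, i.e.\ that no accidental coincidence of branch points or extra automorphisms occurs; this follows because $H_{k,g,1,1}$ is irreducible of the expected dimension and a generic member has simple branching with all the $q_i$ distinct. The same scheme-theoretic recipe handles $\psi$ (here the single gluing point $q$ yields exactly one extra base component, and one recovers the unordered pair $\{p_1,p_2\}$ over $q$) and $\lambda$ (here the central $\mathbb{P}^1$ component $B'$ linking $B_1$ and $B_2$ is the distinguished component; contracting it and its preimage tails separates the two factors and recovers $(\pi_1,\pi_2)$ together with the $\delta$ marked branches on each side).

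The main obstacle, and the reason for the hypotheses $g\geq 2$, $(g_1,k_1)\neq(g_2,k_2)$, and the passage to $Symm^2$, is the issue of how to order or distinguish the two glued factors in $\lambda$. When the two input covers have the same discrete invariants $(g_i,k_i)$ the construction is symmetric under swapping $\mathcal C_1\leftrightarrow\mathcal C_2$, so the honest source is the symmetric product and injectivity can only hold there; when $(g_1,k_1)\neq(g_2,k_2)$ the two factors are canonically distinguished by the genus and degree of the two sides of the separating structure in $C'$, so no symmetrization is needed and $\lambda$ is generically injective on the ordered product. I would make this precise by showing that for a general image point the two ``halves'' obtained after contracting $B'$ have distinct numerical invariants exactly when $(g_1,k_1)\neq(g_2,k_2)$, and are interchangeable otherwise. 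The condition $g\geq 2$ ensures the generic curves carry no extra automorphisms that would identify distinct source data, so that the set-theoretic inverse I construct is genuinely injective on a dense open locus; promoting this to a morphism of schemes uses the coarse representability from Theorem \ref{teo:repr} and the local description of $\overline{H}_{k,g,\ell_1,\ldots,\ell_n}$ near a boundary point. I expect the bookkeeping of which branch/marked points survive the contraction of the rational tails—especially in the $\lambda$ case with $\delta$ simultaneous gluings—to be the most delicate step, but no genuinely new idea beyond a careful reading of the constructions in Section \ref{sec:gon} should be required.
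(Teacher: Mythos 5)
Your proposal rests on the same geometric mechanism as the paper's proof---the source data remains recognizable inside the image cover---but it packages the injectivity argument differently, so a comparison is worthwhile. The paper never constructs an inverse: it takes a general $h$ in the open locus of smooth covers, an arbitrary $\tilde h$ in the fiber over $h'=\nu(h)$, and reads off from the definition of the natural transformation that both source curves $C$ and $\tilde C$ are subcurves of $C'$ with $\pi'|_{C}=\pi$ and $\pi'|_{\tilde C}=\tilde\pi$; the hypothesis $g\geq 2$ then forces the irreducible curve $C$ to be a component of $\tilde C$, since otherwise $C'$ would contain a subcurve of genus at least $2g>g+1=g(C')$, and a count of the components added by the construction gives $\tilde C=C$ and $\tilde\pi=\pi$; for $\lambda$ the same reasoning gives $\{\tilde C_1,\tilde C_2\}=\{C_1,C_2\}$, the matching being forced when $(g_1,k_1)\neq(g_2,k_2)$ and absorbed by $Symm^2$ otherwise. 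Your reconstruction-by-contraction produces an explicit retraction on the image, which is more informative, but it obliges you to verify that the recipe is canonical (invariant under isomorphisms of the image datum), whereas the paper's fiber comparison needs no such verification and automatically handles preimages $\tilde h$ that are not themselves general. Three corrections to your sketch: (i) the hypothesis $g\geq2$ has nothing to do with automorphisms---its role in the paper is exactly the genus count above, and in your version it is what makes the genus-$g$ core the unique non-rational, hence recognizable, part of $C'$; (ii) copy III is a component of the curve $C'$, not of the base $B'$, which is $B$ with two rational leaves attached at $q_1$ and $q_2$; (iii) in the $\lambda$ construction (the case $r=2$ of Theorem \ref{red:nonconjug0}) the central rational component of the base is contracted at the end of the construction, because it carries only two special points, so the image cover has a two-component base and your ``distinguished central component'' must be replaced by the unique node of that base. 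Finally, your recovery of only the unordered pair $\{p_1,p_2\}$ for $\psi$ strictly speaking bounds the general fiber by two points rather than one; the paper's own proof is equally silent on how the ordering of the marked points (and, for $\lambda$ with $\delta\geq 2$, of the $\delta$ glued pairs) is pinned down, so this is an imprecision you share with the paper rather than a gap you introduced.
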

\begin{proof}
For simplicity, we denote by $\nu\colon\overline{H}_{\underline n}^{un} \rightarrow  \overline{H}_{\underline n'}$ the induced morphism 
$\gamma$, $\phi$ or $\psi$. 
Since $H_{\underline n}^{un}$ is an open dense subset of $\overline H_{\underline n}^{un}$, 
to show that $\nu$ is generically injective, 
it is enough to show that  $\nu^{-1}(\nu(h))=\{h\}$ for $h\in H_{\underline n}^{un}$. 
Let  $\pi\colon C\rightarrow B$ be the admissible cover associated to $h$, where $B=\mathbb{P}^1$. 
Let $\pi'\colon C'\rightarrow B'$ be the admissible cover associated to $h'=\nu(h)$. By construction, $C$ is a subcurve  of $C'$ and we have  $\pi'|_C=\pi$ and $g(C')=g(C)+\varepsilon(\nu)$, where $\varepsilon(\nu)=1$ for $\nu\in\{\phi,\psi\}$ and  $\varepsilon(\gamma)=0$. 

Let $\tilde h\in \nu^{-1}(\nu(h))$ and let $\tilde \pi\colon \widetilde C\rightarrow \widetilde B$ be the admissible cover associated to $\tilde h$.
Then $\widetilde C$ is a subcurve of $C'$, $g(C')=g(\widetilde C)+\varepsilon(\nu)$ and $\pi'|_{\widetilde C}=\tilde \pi$. 
Since $C$ is smooth and both $C$ and $\widetilde C$ are  subcurves of $C'$ of same genus $g(C')-\varepsilon(\nu)\geq 2$, we have that $C$ is a smooth component of $\widetilde C$ and  $\tilde \pi|_C= \pi$.

Now note that since  $h'=\nu( h)$, then by construction the number of irreducible components of $B'$ is equal to the number of irreducible components of $B$ plus one (if $\nu\in\{\gamma,\psi\}$) or  two (if $\nu=\phi$). 
But also, since   $h'=\nu(\tilde h)$, the number of irreducible components of $B'$ is equal to the number of irreducible components of $\widetilde B$ plus one (if $\nu\in\{\gamma,\psi\}$) or  two (if $\nu=\phi$). Hence $B$ and $\widetilde B$  have the same number of components and since $B=\mathbb{P}^1$, we have  $\widetilde B=\mathbb{P}^1$. But then, by condition (1) of an admisible cover, $\widetilde C$ must be smooth and  we have $\widetilde C=C$ and $\tilde \pi=\pi$, showing that $\tilde h=h$.

We now show that 
$\lambda\colon \overline{H}^{un}_{\underline n_1}\times \overline{H}^{un}_{\underline n_2} \rightarrow  \overline{H}_{\underline n'}$ 
is generically injective.  
Again, it is enough to show that if $h_i\in H^{un}_{\underline n_i}$ for $i=1,2$, then $\lambda^{-1}(\lambda(h_1,h_2))=\{h_1,h_2\}$. 
Let  $\pi_i\colon C_i\rightarrow B_i$ be the admissible cover associated to $h_i$, where $B_i=\mathbb{P}^1$, for $i=1,2$.
Let $\pi'\colon C'\rightarrow B'$ be the admissible cover associated to $h'=\lambda(h_1,h_2)$. 
By construction, $B'$ has  two components (corresponding to $B_1$ and $B_2$),  $C_i$  is  a subcurve of $C'$ and we have   $\pi'|_{C_i}=\pi_i$, for $i=1,2$. 

For $i=1,2$ let $\tilde h_i\in \overline H_{\underline n_i}^{un}$ be such that $\lambda(\tilde h_1,\tilde h_2)=h'$ and let $\tilde \pi_i\colon \widetilde C_i\rightarrow \widetilde B_i$ be the admissible cover associated to $\tilde h_i$.
Then  $\widetilde C_i$ is a subcurve of $C'$ and
 $\pi|_{\widetilde C_i}=\tilde \pi_i$, for $i=1,2$.  Moreover,  the number of components of $B'$ is the sum of the numbers of components of $\widetilde B_1$ and $\widetilde B_2$. But $B'$ has only two components, and hence $\widetilde B_i=\mathbb{P}^1$ for $i=1,2$. 
Then, by condition (1) of an admissible cover, both $\widetilde C_1$ and $\widetilde C_2$ are smooth curves. By the construction on Theorem \ref{red:nonconjug0}, we must have $\{C_1,C_2\}=\{\widetilde C_1,\widetilde C_2\}$. 
Now, if $\underline n_1\neq \underline n_2$ then this clearly implies  $h_i=\tilde h_i$ for $i=1,2$. If $\underline n_1= \underline n_2=\underline n$ then $\{h_1,h_2\}$ defines an unique point in $Symm^2(\overline H_{\underline  n}^{un})$ and we are done.
\end{proof}

We now describe the image of the maps introduced in Theorem \ref{thm:geninj}. 
Note that these images lie in the boundary $\overline H_{k,g}-H_{k,g}$ of the corresponding Hurwitz scheme.
Let $\Delta^n_{k,g}$ be the closure of the locus in $\overline H_{k,g}$ of  admissible covers $\pi:C \rightarrow B$ where $B$ has $n+1$ irreducible components. 
Recall that $B$ is a stable pointed curve of genus $0$ having as marked points  the smooth branch locus of $\pi$.
It is a well known fact that $\Delta_{k,g}^n$ has pure codimension $n$ in $\overline H_{k,g}$ and $\Delta_{k,g}^n\subset \Delta_{k,g}^{n-1}$ (cf. \cite[page 181]{HM}). 
Furthermore, the irreducible components of $\Delta_{k,g}^n$ correspond to different stable arrangements  of the marked points in the components of $B$ . For instance, the components of $\Delta_{k,g}^1$ are the closures  $\Delta_{k,g}^{1,\ell}$ of the loci of those admissible covers where $B$ has two components, one of wich contains $\ell$ of the $b$ marked points, for $2\leq \ell\leq b/2$.

Let $\Delta_{k,g}^{2,(\ell_1,\ell_2,\ell_3)}$ be the closure of the locus of $k$-sheeted admissible covers $\pi:C\rightarrow B$ where $B$ has three irreducible components $B_1,B_2,B_3$ such that  $B_i$ meets $B_{i+1}$ in a single node, for $i=1,2$, $B_1$ and $B_3$ do not intersect, and each $B_i$ contains $\ell_i$ marked points, for $i=1,2,3$. 
Then the components of  $\Delta_{k,g}^2$ are those $\Delta_{k,g}^{2,(\ell_1,\ell_2,\ell_3)}$ where the arrangement of the marked points is stable, that is, where $\ell_2\geq1$ and $\ell_i\geq2$ for $i=1,3$.

\begin{theorem}
\label{thm:images}
The  maps defined in Theorem \ref{thm:geninj} satisfy
\[
\overline{\mathrm{Im}(\gamma)}=\Delta_{k+1,g}^{1,2},
\quad\quad
\overline{\mathrm{Im}(\phi)}=\Delta_{k+1,g+1}^{2,(2,2g+2k-2,2)},
\quad\quad
\overline{\mathrm{Im}(\psi)}=\Delta_{k,g+1}^{1,2}
\]
and
\[
\overline{\mathrm{Im}(\lambda)}=\Delta_{k_1+k_2-\delta,g_1+g_2+\delta-1 }^{1, \min\{2g_1+2k_1-2,2g_2+2k_2-2\}}.
\]
\end{theorem}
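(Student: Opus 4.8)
The plan is to identify each image with the claimed boundary stratum by a two-way containment argument, using the explicit combinatorial data recorded by the constructions of Theorems \ref{irred:lem} and \ref{red:nonconjug0}. In each case I will first verify that a general admissible cover in the image has base curve $B'$ with exactly the prescribed number of components and the prescribed distribution of the $b'=2g'+2k'-2$ branch points; this places the image inside the asserted stratum $\Delta^{n,\bullet}$. I will then compare dimensions: by Theorem \ref{thm:geninj} each map is generically injective, so $\dim\overline{\mathrm{Im}}$ equals the dimension of the source computed in Theorem \ref{teo:repr}, and I will check this matches $\dim\Delta^{n,\bullet}=2g'+2k'-5-n$. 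Since each $\Delta^{n,\bullet}$ is irreducible (it is cut out by fixing a single stable arrangement of the marked points), an irreducible image of the same dimension contained in it must be all of it, giving equality of closures.

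For $\psi$, the construction in Theorem \ref{irred:lem}(b) glues a single copy of $\mathbb P^1$ to $B$ at $q$, so $B'$ has exactly two components, and the two new branch points both land on the new component; the old component keeps $b=2g+2k-2=2(g+1)+2k-2-2$ points. Thus $B'$ has the rational component carrying exactly $2$ of the $b'$ branch points, which is precisely the defining datum of $\Delta^{1,2}_{k,g+1}$. For $\lambda$ with $r=2$, after contracting the unstable linking component $B'$ (as described at the end of Theorem \ref{red:nonconjug0}) one obtains a base with two components $B_1,B_2$ meeting in a node, carrying $b_1=2g_1+2k_1-2$ and $b_2=2g_2+2k_2-2$ branch points respectively, and the relevant invariant is $\ell=\min\{b_1,b_2\}$, matching $\Delta^{1,\min\{2g_1+2k_1-2,\,2g_2+2k_2-2\}}$. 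For $\phi$, Theorem \ref{irred:lem}(a) attaches copies at $q_1$ and $q_2$ linked through copy III, producing a chain $B_1-B_2-B_3$ of three components where the two end components each receive exactly $2$ of the new branch points and the middle component retains the original $b=2g+2k-2$; this is the arrangement $(2,\,2g+2k-2,\,2)$, i.e. $\Delta^{2,(2,2g+2k-2,2)}_{k+1,g+1}$.

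The \textbf{main obstacle} will be the verification of the combinatorial branch-point count in the $\phi$ case, and in particular confirming that the resulting three-component chain is genuinely the stable one $\Delta^{2,\bullet}$ rather than collapsing to a $\Delta^1$-stratum. I must check that copies I, II and III are each stable (the end components carry the two simple branch points coming from the degree-$(l_i+1)$ maps, the middle retains enough marked points), and that no contraction of Knudsen type merges components; the parenthetical count $b+4=2(g+1)+2(k+1)-2$ in the proof of Theorem \ref{irred:lem}(a), with two extra points on each end component, is exactly what guarantees the arrangement $(2,2g+2k-2,2)$ is stable and of the correct codimension $2$.

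For the dimension bookkeeping I would assemble the computation inline rather than displayed, e.g.\ noting $\dim\Delta^{n,\bullet}_{k',g'}=(2g'+2k'-5)-n$ and matching against $\dim\overline H_{k,g,1,1}=2g+2k-3=\dim\Delta^{2}_{k+1,g+1}$, $\dim\overline H_{k,g,2}=2g+2k-4=\dim\Delta^{1}_{k,g+1}$, and $\dim(\overline H_{k_1,g_1,\delta}\times\overline H_{k_2,g_2,\delta})=2(g_1+g_2)+2(k_1+k_2)-8=\dim\Delta^{1}_{k_1+k_2-\delta,\,g_1+g_2+\delta-1}$. Once containment and these equalities are in hand, irreducibility of the target strata closes each argument.
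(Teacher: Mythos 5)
Your proposal is correct and takes essentially the same approach as the paper: read off from the constructions in Theorems \ref{irred:lem} and \ref{red:nonconjug0} that the (irreducible) image lies in the named stratum, use generic injectivity (Theorem \ref{thm:geninj}) together with the dimension count of Theorem \ref{teo:repr} to match the dimension of the pure-codimension-$n$ stratum, and conclude equality of closures by irreducibility of the component. Your explicit branch-point bookkeeping for $\phi$ in fact records the arrangement $(2,2g+2k-2,2)$ correctly, where the paper's own proof abbreviates it with the typo $\Delta^{2,(2,2)}_{k+1,g+1}$.
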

\begin{proof}
From the proofs of Theorems \ref{prop:step2}, \ref{irred:lem} and \ref{red:nonconjug0}, we see that 
\[{\mathrm{Im}(\gamma)}\subset\Delta_{k+1,g}^{1,2},
\quad\quad
\mathrm{Im}(\phi)\subset\Delta_{k+1,g+1}^{2,(2,2g+2k-2,2)},
\quad\quad
{\mathrm{Im}(\psi)}\subset\Delta_{k,g+1}^{1,2}\]
and
\[{\mathrm{Im}(\lambda)}\subset\Delta_{k_1+k_2-\delta,g_1+g_2+\delta-1 }^{1, \min\{2g_1+2k_1-2,2g_2+2k_2-2\}}.\]
As in the proof of Theorem \ref{thm:geninj}, we denote the maps $\gamma$, $\phi$, $\psi$ and $\lambda$ by $\nu$ and denote by $\overline H_{\underline n'}$ its target space. 
As we have seen above, the image of $\nu$ lies in an irreducible component $\Delta$ of $\Delta_{\underline n'}^{\varepsilon(\nu)}$, where $\varepsilon(\nu)=1$ for $\nu\in\{\lambda,\gamma,\psi\}$ and $\varepsilon(\phi)=2$.
Since $\Delta$ is irreducible and has codimension $\varepsilon(\nu)$ in $\overline H_{\underline n'}$, to conclude that  $\overline{\mathrm{Im}(\nu)}= \Delta$ it is enough to notice that this image also has codimension $\varepsilon(\nu)$ in $\overline H_{\underline n'}$.
Since  $\nu$ is generically injective,
 this can be checked simply calculating the dimensions of target and source Hurwitz schemes of each map.
\end{proof}

\bibliographystyle{mn}

\addcontentsline{toc}{section}{References}

\bigskip
\noindent{\smallsc Juliana Coelho, Universidade Federal Fluminense (UFF), 
Instituto de Matem\'atica e Estat\'istica - 
 Rua Prof. Marcos Waldemar de Freitas Reis, S/N, 
 Gragoatá, 24210-201 - Niterói -  RJ,  Brasil}\\
{\smallsl E-mail address: \small\verb?julianacoelhochaves@id.uff.br?}
\bigskip
\bigskip

\noindent{\smallsc Frederico Sercio, Universidade Federal de Juiz de Fora  (UFJF),  Departamento de Matem\'{a}tica, Rua Jos\'{e} Louren\c{c}o Kelmer, s/n - Campus Universit\'{a}rio, 36036-900 - Juiz de Fora - MG, Brasil}\\
{\smallsl E-mail address: \small\verb?fred.feitosa@ufjf.edu.br?}

\end{document}